\documentclass[a4paper,12pt]{article}

\usepackage{amsmath,amsfonts,amssymb,amsthm,anysize,mathrsfs}
\usepackage{enumerate}
\usepackage{epsfig}
\usepackage{supertabular}
\usepackage{graphicx}
\usepackage{color}
\usepackage{xcolor}
\usepackage{booktabs}
\usepackage{array}
\usepackage{makecell}
\usepackage{cases}
\usepackage{indentfirst}
\usepackage[a4paper,total={160mm,240mm}]{geometry}
\usepackage[none]{hyphenat}
\usepackage{url,hyperref}
\usepackage{tablefootnote}
\usepackage{setspace}
\usepackage{authblk}

\hypersetup{
colorlinks=true,
linkcolor=blue,
citecolor=blue,
urlcolor=black,
pdftitle={The generalized Griesmer and antiGriesmer bounds},
pdfauthor={Haihua Deng}
}

\newtheorem{theorem}{Theorem}[section]
\newtheorem{corollary}[theorem]{Corollary}
\newtheorem{lemma}[theorem]{Lemma}
\newtheorem{proposition}[theorem]{Proposition}

\theoremstyle{definition}
\newtheorem{remark}[theorem]{Remark}

\providecommand{\keywords}[1]{\textbf{Keywords.} #1}
\providecommand{\MSC}[1]{\textbf{MSC 2020.} #1}

\begin{document}

\title{The generalized Griesmer and antiGriesmer bounds}
\author{Haihua~Deng\thanks{School of Mathematical Sciences, Zhejiang University, 866 Yuhangtang Road, Hangzhou 310058, Zhejiang, China. \texttt{haihua.deng@zju.edu.cn}.}}
\date{}
\maketitle

\begin{abstract}
We present three proofs of the generalized Griesmer bound together with the corresponding proofs of the generalized antiGriesmer bound. The first proof follows from inequalities relating consecutive minimum and maximum subcode support weights. We also write the projective construction of Tsfasman and Vl\u{a}du\c{t} as a residual code argument and express the geometric proof of Kurz, Landjev, and Rousseva in terms of shortened subcodes. In addition, complements in repeated simplex codes show that the two bounds are equivalent. The residual and shortening arguments also determine the consequences of equality for the resulting residual codes and shortened subcodes. Finally, the complement relation transfers known divisibility results for Griesmer codes to antiGriesmer codes.
\end{abstract}

\noindent\keywords{generalized Hamming weight; generalized Griesmer bound; generalized antiGriesmer bound; residual code; shortened subcode; repeated simplex codes}

\medskip
\noindent\MSC{94B05; 94B65; 51E20}

\section{Introduction}\label{sec:introduction}

Throughout this paper, $q$ is a prime power and $\mathbb{F}_q$ is the finite field with $q$ elements. An $[n,k,d]_q$ code is a $k$-dimensional subspace of $\mathbb{F}_q^n$ with minimum Hamming distance $d$. If the minimum distance is unspecified, we simply write an $[n,k]_q$ code.

The Griesmer bound is one of the basic lower bounds on the length of a linear code: If $C$ is an $[n,k,d]_q$ code, then $n\geq\sum_{i=0}^{k-1}\left\lceil\frac{d}{q^i}\right\rceil$. Griesmer \cite[Theorem~5]{Griesmer1960} proved the binary case, and Solomon and Stiffler \cite[Theorem~1']{SolomonStiffler1965} proved the bound for arbitrary finite fields. A code attaining equality in the Griesmer bound is called a \textit{Griesmer code}.

Wei \cite{Wei1991} introduced generalized Hamming weights to measure the supports of higher-dimensional subcodes. If $d_r(C)$ denotes the minimum support size of an $r$-dimensional subcode of $C$, then the generalized Griesmer bound states that
\[
n\geq d_r(C)+\sum_{i=1}^{k-r}\left\lceil\frac{d_r(C)}{q^i{r\brack 1}_q}\right\rceil,
\]
where ${r\brack 1}_q=(q^r-1)/(q-1)$. This bound follows from the work of Helleseth, Kl{\o}ve, and Ytrehus \cite[Theorem~5]{HellesethKloveYtrehus1992} and Helleseth, Kl{\o}ve, Levenshtein, and Ytrehus \cite[Theorem~1]{HellesethKloveLevenshteinYtrehus1995}. The latter proof compares consecutive generalized Hamming weights by counting subcodes. Tsfasman and Vl\u{a}du\c{t} \cite[Corollary~3.5]{TsfasmanVladut1995} gave a projective proof based on their first construction in \cite[Proposition~3.2]{TsfasmanVladut1995}, which is the residual code obtained from a subcode of minimum support. Kurz, Landjev, and Rousseva \cite[Theorem~1]{KurzLandjevRousseva2026} gave a geometric proof by projection from a point. In code terms, this is the shortened subcode associated with the chosen point.

The same projective multiset also determines the maximum support weights. Tsfasman and Vl\u{a}du\c{t} \cite[Theorem~2.1]{TsfasmanVladut1995} and Dodunekov and Simonis \cite{DodunekovSimonis1998} describe the correspondence between full-length linear codes and spanning multisets of points in $\mathrm{PG}(k-1,q)$. Under this correspondence, $d_r(C)$ is determined by the largest intersection with a subspace of codimension $r$, whereas the maximum $r$-dimensional subcode support weight $\delta_r(C)$ is determined by the smallest such intersection. Thus, both parameters are determined by the intersection numbers of the same projective multiset.

For $r=1$, the parameter $\delta_1(C)$ is the maximum Hamming weight of $C$ and, since $C$ is linear, also its diameter. Thus, the study of $\delta_1(C)$ is naturally connected with the classical diameter problem for anticodes, which need not be linear. Delsarte \cite{Delsarte1973} established the code-anticode bound. In the binary Hamming space, Kleitman \cite{Kleitman1966} determined the largest anticodes of prescribed diameter, and Ahlswede and Khachatrian \cite{AhlswedeKhachatrian1998} later proved the corresponding $q$-ary diametric theorem. In the linear setting, Nogin \cite{Nogin1999} studied higher weights of anticodes and related them to maximum subcode support weights. More recently, Chen and Xie \cite[Theorem~2.2]{ChenXie2025} proved an antiGriesmer bound for projective linear anticodes subject to a restriction on the length. This result was extended in two different directions. Xie, Chen, Ding, and Li \cite{XieChenDingLi2026} established the generalized antiGriesmer bound for projective linear codes in terms of maximum $r$-dimensional subcode support weights and studied the corresponding subcode support weight distributions. For $r=1$, Zhang, Chen, Lin, and Liu \cite[Theorem~1.1]{ZhangChenLinLiu2026} removed the restriction on the length and proved the ordinary antiGriesmer bound for every full-length linear anticode.

The relation between the two bounds is most transparent under complementation. Chen and Xie \cite{ChenXie2025} introduced simplex complementary codes in the projective case, and Xie, Chen, Ding, and Li \cite{XieChenDingLi2026} determined the subcode support weight distributions of these codes. For projective multisets, Kurz, Landjev, and Rousseva \cite[Section~2]{KurzLandjevRousseva2026} record the relation $\mathcal{K}'=s-\mathcal{K}$ and the resulting formulas for maximum and minimum intersection numbers. We use the same relation for complements in repeated simplex codes. It gives the corresponding relation between minimum and maximum subcode support weights and proves the equivalence of the generalized Griesmer and antiGriesmer bounds.

The purpose of this paper is to present these three proofs and the complement relation in a common notation and to examine the equality cases of the two bounds. We recall the known arguments for generalized Hamming weights and give their counterparts for maximum subcode support weights. The residual and shortening arguments have the additional advantage that, when equality holds, they determine the parameters of the resulting codes. The complement argument also yields a simple divisibility consequence. Ward introduced divisible codes in \cite{Ward1981} and proved a divisibility theorem for Griesmer codes in \cite[Theorem~1]{Ward1998}. Deng, Huang, and Xiang \cite[Theorems~1.13 and 1.14]{DengHuangXiang2026} obtained further divisibility results. Since the weights of corresponding codewords in a code and its complement sum to the constant weight of a repeated simplex code, these results carry over to antiGriesmer codes.

The paper is organized as follows. In Section~\ref{sec:preliminaries}, we fix the notation and describe residual codes and shortened subcodes in terms of projective multisets. In Section~\ref{sec:hierarchy}, we derive both bounds from inequalities between consecutive support weights. In Section~\ref{sec:complements}, we study complements in repeated simplex codes and obtain the divisibility consequence. In Section~\ref{sec:residuals}, we give the residual proofs and determine the equality cases. In Section~\ref{sec:projective-points}, we give the shortening proof and determine its equality cases.

\section{Preliminaries and notation}\label{sec:preliminaries}

In the sequel, the code $C$ we consider will be linear. For $c=(c_1,\ldots,c_n)\in C$, define $\mathrm{supp}(c)=\{j:c_j\ne0\}$ and $\mathrm{wt}(c)=|\mathrm{supp}(c)|$. The Hamming distance between $x,y\in\mathbb{F}_q^n$ is $\mathrm{wt}(x-y)$; hence, the minimum distance of $C$ is the minimum weight of a nonzero codeword. With respect to the dot product $x\cdot y=\sum_{i=1}^n x_i y_i$, the \textit{dual code} of $C$ is
\[
C^\perp=\{x\in\mathbb{F}_q^n:x\cdot c=0\text{ for every }c\in C\}.
\]
For a subcode $A\leq C$, put $\mathrm{supp}(A)=\bigcup_{a\in A}\mathrm{supp}(a)$, $w(A)=|\mathrm{supp}(A)|$. The \textit{effective length} of $C$ is $n(C)=w(C)$. Thus, $C$ has \textit{full length} if $n(C)=n$, or equivalently, if no coordinate is identically zero on $C$. The \textit{diameter} of $C$ is the largest Hamming distance between two codewords. Since $C$ is linear, its diameter is equal to its maximum Hamming weight. If this value is $\delta$, we also call $C$ an $[n,k,\delta]_q$ \textit{anticode}. A positive integer $\Delta$ is a \textit{divisor of $C$} if $\Delta$ divides $\mathrm{wt}(c)$ for every $c\in C$; in this case, $C$ is \textit{$\Delta$-divisible}.

For $1\leq r\leq k$, define the \textit{$r$-th generalized Hamming weight}, also called the \textit{$r$-th minimum support weight}, and the \textit{maximum $r$-dimensional subcode support weight} by
\[
d_r(C)=\min_{\substack{A\leq C\\ \dim A=r}}w(A),\qquad \delta_r(C)=\max_{\substack{A\leq C\\ \dim A=r}}w(A),
\]
respectively. We use the term \textit{maximum $r$-dimensional subcode support weight} following \cite{XieChenDingLi2026}. We write $d_r$ and $\delta_r$ when the code is clear. Thus, $d_1$ is the minimum distance, while $\delta_1$ is the maximum Hamming weight, or equivalently, the diameter of $C$. Wei \cite[Theorem~1]{Wei1991} proved that $1\leq d_1<\cdots<d_k=n(C)$. The maximum support weights are nondecreasing and satisfy $\delta_k=n(C)$. Hence, we have $d_k(C)=\delta_k(C)=n$ when $C$ is of full length.

For integers $0\leq v\leq u$, the \textit{Gaussian coefficient} ${u\brack v}_q$ is the number of $v$-dimensional subspaces of $\mathbb{F}_q^u$. In particular, ${r\brack 1}_q=(q^r-1)/(q-1)$. For $x\in\mathbb{Z}_{\ge 0}$ and $1\leq r\leq k$, define
\begin{equation}\label{eq:griesmer-functions}
\begin{aligned}
g_q^{(r)}(k,x)&=x+\sum_{i=1}^{k-r}\left\lceil\frac{x}{q^i{r\brack 1}_q}\right\rceil, &
 a_q^{(r)}(k,x)&=x+\sum_{i=1}^{k-r}\left\lfloor\frac{x}{q^i{r\brack 1}_q}\right\rfloor.
\end{aligned}
\end{equation}
An empty sum is zero. When $r=1$, write $g_q(k,x)=g_q^{(1)}(k,x)$ and $a_q(k,x)=a_q^{(1)}(k,x)$. A full-length code $C$ is called an \textit{$r$-th generalized Griesmer code} if $n=g_q^{(r)}(k,d_r)$, and an \textit{$r$-th generalized antiGriesmer code} if $n=a_q^{(r)}(k,\delta_r)$. When $r=1$, we use the terms Griesmer code and \textit{antiGriesmer code}, respectively.

We next recall the projective description of a full-length code. Let $\mathcal{P}$ be the point set of $\mathrm{PG}(k-1,q)$. A \textit{multiset of points} is a map $\mathcal{M}:\mathcal{P}\to\mathbb{Z}_{\geq0}$. For $S\subseteq\mathcal{P}$, put $\mathcal{M}(S)=\sum_{P\in S}\mathcal{M}(P)$, and write
\[
|\mathcal{M}|=\mathcal{M}(\mathcal{P}),\qquad \gamma(\mathcal{M})=\max_{P\in\mathcal{P}}\mathcal{M}(P),\qquad \mu(\mathcal{M})=\min_{P\in\mathcal{P}}\mathcal{M}(P).
\]
The multiset is \textit{spanning} if its points of positive multiplicity span $\mathrm{PG}(k-1,q)$. If $G$ is a generator matrix of a full-length code $C$, its nonzero columns define a spanning multiset $\mathcal{M}_C$, where proportional columns are counted with multiplicity. Conversely, every spanning multiset gives a full-length code, uniquely up to a permutation of the coordinates and multiplication of individual coordinates by nonzero field elements; see \cite{DodunekovSimonis1998} for this correspondence between linear codes and multisets of points in projective spaces. The code $C$ is \textit{projective} if and only if $\gamma(\mathcal{M}_C)=1$. The $q$-ary \textit{simplex code} of dimension $k$ is generated by a matrix whose columns contain one nonzero representative of each point of $\mathrm{PG}(k-1,q)$. It has length ${k\brack 1}_q$, and every nonzero codeword has weight $q^{k-1}$.

Let $U\leq\mathbb{F}_q^k$. If $\dim U=r$, then $\{xG:x\in U\}$ is an $r$-dimensional subcode of $C$, and
\begin{equation}\label{eq:geometric-support}
w(\{xG:x\in U\})=\mathcal{M}_C\bigl(\mathcal{P}\setminus\mathrm{PG}(U^\perp)\bigr).
\end{equation}
The set $\mathcal{P}\setminus\mathrm{PG}(U^\perp)$ has ${k\brack 1}_q-{k-r\brack 1}_q=q^{k-r}{r\brack 1}_q$ points. For an integer $j$, put
\[B_j^{(r)}(C)=\#\{A\leq C:\dim A=r,\ w(A)=j\}.\]
Following \cite{XieChenDingLi2026}, the sequence $\left(B_j^{(r)}(C)\right)_j$ is the \textit{$r$-dimensional subcode support weight distribution}. Its smallest and largest nonzero indices are $d_r(C)$ and $\delta_r(C)$.

Let $\lambda\geq\gamma(\mathcal{M})$, and let $\lambda\mathcal{P}$ denote the constant multiset of multiplicity $\lambda$. This is the projective multiset of the $\lambda$-fold repetition of the $q$-ary simplex code of dimension $k$. The \textit{complement of $\mathcal{M}$ in $\lambda\mathcal{P}$} is $\mathcal{M}^{[\lambda]}=\lambda\mathcal{P}-\mathcal{M}$. Thus, $\mathcal{M}^{[\lambda]}(P)=\lambda-\mathcal{M}(P)$ for every $P\in\mathcal{P}$. If $\mathcal{M}^{[\lambda]}$ is spanning, denote its associated code by $C^{[\lambda]}$. This condition is automatic when $\lambda>\gamma(\mathcal{M})$.

We shall use two ways to reduce the dimension. For a subcode $A\leq C$, define the \textit{residual code of $C$ with respect to $A$} by
\[
\mathrm{Res}(C,A)=\{(c_j)_{j\notin\mathrm{supp}(A)}:c\in C\}.
\]
This code is obtained by puncturing the coordinates in $\mathrm{supp}(A)$. Denote the puncturing map by $\pi_A:C\to\mathrm{Res}(C,A)$. When $A=\langle c\rangle$, this is the usual \textit{residual code of $C$ with respect to $c$}, denoted by $\mathrm{Res}(C,c)$. Coordinates that are identically zero after puncturing are deleted, so residual codes are always written with their effective lengths.

Let $G_1,\ldots,G_n$ be the columns of $G$. For a point $P=\langle p\rangle$ of $\mathrm{PG}(k-1,q)$, define the \textit{shortened subcode of $C$ associated with $P$} by
\[
C_P=\{(xG_j)_{\langle G_j\rangle\ne P}:x\in p^\perp\}.
\]
This terminology includes the case $\mathcal{M}_C(P)=0$. The map from $p^\perp$ to $C_P$ is injective: if all retained coordinates of $xG$ vanish, then the coordinates represented by $P$ also vanish because $x\in p^\perp$, so $xG=0$ and hence $x=0$. No retained coordinate is identically zero on $C_P$, since this would force its column to lie in $(p^\perp)^\perp=\langle p\rangle$. Thus, $C_P$ is a full-length $[n-\mathcal{M}_C(P),k-1]_q$ code. If $\mathcal{M}_C(P)>0$, it is obtained by ordinary shortening at any coordinate represented by $P$ and deleting the other coordinates represented by $P$, which become identically zero. If $\mathcal{M}_C(P)=0$, no coordinate is deleted and $C_P=\{xG:x\in p^\perp\}$ is a codimension-one subcode of $C$.

Both constructions have standard projective interpretations. If $A=\{xG:x\in U\}$, then the coordinates outside $\mathrm{supp}(A)$ are precisely the columns lying in $\mathrm{PG}(U^\perp)$. Thus, $\mathrm{Res}(C,A)$ is represented by the restriction of $\mathcal{M}_C$ to this subspace. This is the first construction of Tsfasman and Vl\u{a}du\c{t} \cite[Proposition~3.2]{TsfasmanVladut1995}. For $C_P$, choose coordinates so that $P=\langle e_k\rangle$ and project from $P$ onto the hyperplane $x_k=0$. Deleting the last entry of every column not representing $P$ gives a generator matrix of $C_P$, while the columns representing $P$ disappear. Kurz, Landjev, and Rousseva use this projection in the proof of \cite[Theorem~1]{KurzLandjevRousseva2026}.

Finally, we list the elementary identities used below.

\begin{lemma}\label{lem:rounding}
Let $a,b,D,m$ be positive integers, and let $x$ be a nonnegative integer. Then
\begin{align}
\left\lceil\frac{\left\lceil x/a\right\rceil}{b}\right\rceil&=\left\lceil\frac{x}{ab}\right\rceil,&
\left\lfloor\frac{\left\lfloor x/a\right\rfloor}{b}\right\rfloor&=\left\lfloor\frac{x}{ab}\right\rfloor,\label{eq:nested-rounding}\\
\left\lceil\frac{x+\left\lceil x/D\right\rceil}{m(D+1)}\right\rceil&=\left\lceil\frac{x}{mD}\right\rceil,&
\left\lfloor\frac{x+\left\lfloor x/D\right\rfloor}{m(D+1)}\right\rfloor&=\left\lfloor\frac{x}{mD}\right\rfloor.\label{eq:shifted-rounding}
\end{align}
\end{lemma}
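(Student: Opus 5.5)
The nested identities \eqref{eq:nested-rounding} are standard; I would prove them via the defining property of the ceiling. For integers $t$ and real $y$, we have $\lceil y\rceil\le t$ if and only if $y\le t$.

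Apply this to the left side of the first identity. The inequality $\lceil\lceil x/a\rceil/b\rceil\le t$ holds iff $\lceil x/a\rceil\le bt$. Since $bt$ is an integer, this holds iff $x/a\le bt$, that is, iff $x/(ab)\le t$. This last condition is equivalent to $\lceil x/(ab)\rceil\le t$. As the two integers agree on every $t$, they are equal. The floor version is identical, using the fact that for an integer $t$, $\lfloor y\rfloor\ge t$ if and only if $y\ge t$.

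For \eqref{eq:shifted-rounding}, I would first reduce to the case $m=1$. Applying \eqref{eq:nested-rounding} with $a=D+1$ and $b=m$ gives
\[
\left\lceil\frac{x+\lceil x/D\rceil}{m(D+1)}\right\rceil=\left\lceil\frac{\lceil (x+\lceil x/D\rceil)/(D+1)\rceil}{m}\right\rceil .
\]
Likewise,
\[
\lceil x/(mD)\rceil=\lceil\lceil x/D\rceil/m\rceil .
\]
So it suffices to show
\[
\left\lceil\frac{x+\lceil x/D\rceil}{D+1}\right\rceil=\left\lceil\frac{x}{D}\right\rceil ,
\]
and similarly for floors. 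Here \eqref{eq:nested-rounding} applies because the numerator $x+\lceil x/D\rceil$ is a nonnegative integer.

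For the reduced ceiling identity, write $x=Ds-e$ with $s=\lceil x/D\rceil$ and $0\le e\le D-1$. Then $x+\lceil x/D\rceil=(D+1)s-e$. Since $0\le e<D+1$, dividing by $D+1$ and taking ceilings gives exactly $s$.

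For the floor version, write $x=Ds+e$ with $s=\lfloor x/D\rfloor$ and $0\le e\le D-1$. Then $x+\lfloor x/D\rfloor=(D+1)s+e$, and its floor after dividing by $D+1$ is $s$.

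There is no real obstacle here. The only point needing care is the reduction from general $m$ to $m=1$: one must split the denominator $m(D+1)$ in the order that lets the nested identity apply, dividing by $D+1$ first and then by $m$, and check that the numerators involved are integers.
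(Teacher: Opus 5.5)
Your proof is correct and follows essentially the same route as the paper. The paper also sets $y=\lceil x/D\rceil$ (or $\lfloor x/D\rfloor$), shows that $(x+y)/(D+1)$ rounds to $y$, and then passes from the denominator $D+1$ to $m(D+1)$ via the nested identity; the only difference is that you prove the nested identity explicitly through $\lceil u\rceil\le t \iff u\le t$ for integers $t$, which the paper leaves as following from the definitions.
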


\begin{proof}
The identities in \eqref{eq:nested-rounding} follow from the definitions of the ceiling and floor functions. For the first identity in \eqref{eq:shifted-rounding}, put $y=\left\lceil x/D\right\rceil$. Then $(y-1)(D+1)<x+y\leq y(D+1)$, so $\left\lceil(x+y)/(D+1)\right\rceil=y$. The result follows by applying the first identity in \eqref{eq:nested-rounding} with divisor $m$. The floor identity is proved in the same way with $y=\left\lfloor x/D\right\rfloor$.
\end{proof}

\section{The averaging proof}\label{sec:hierarchy}

We begin with the counting argument of Helleseth, Kl{\o}ve, Levenshtein, and Ytrehus \cite[Theorem~1]{HellesethKloveLevenshteinYtrehus1995}. Their inequality concerns minimum support weights. The same count gives the corresponding inequality for maximum support weights.

\begin{proposition}\label{prop:support-comparison}
Let $C$ be an $[n,k]_q$ code. If $1\le s\le r\le k$, then
\[
(q^r-q^{r-s})d_r(C)\ge(q^r-1)d_s(C),\qquad (q^r-q^{r-s})\delta_r(C)\le(q^r-1)\delta_s(C).
\]
\end{proposition}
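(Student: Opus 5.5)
Fix an $r$-dimensional subcode $A\le C$ and double count incidences between the $s$-dimensional subspaces $B\le A$ and the coordinates in $\mathrm{supp}(A)$. Summing $w(B)$ over all ${r\brack s}_q$ such $B$ gives
\[
\sum_{\substack{B\le A\\ \dim B=s}} w(B)=\sum_{j\in\mathrm{supp}(A)}\#\{B\le A:\dim B=s,\ j\in\mathrm{supp}(B)\}.
\]
For a fixed $j\in\mathrm{supp}(A)$, the coordinate map $A\to\mathbb{F}_q$, $a\mapsto a_j$, is a nonzero linear functional. Its kernel $K_j$ is therefore a hyperplane of $A$ of dimension $r-1$. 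The subspace $B$ fails to contain $j$ in its support exactly when $B\le K_j$. Hence the number of $B$ with $j\in\mathrm{supp}(B)$ is ${r\brack s}_q-{r-1\brack s}_q$, which is independent of $j$. This yields the exact identity
\[
\sum_{\substack{B\le A\\ \dim B=s}} w(B)=\left({r\brack s}_q-{r-1\brack s}_q\right)w(A).
\]

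Next I would simplify the ratio. The ratio of Gaussian coefficients is ${r-1\brack s}_q/{r\brack s}_q=(q^{r-s}-1)/(q^r-1)$. Consequently,
\[
{r\brack s}_q-{r-1\brack s}_q={r\brack s}_q\,\frac{q^r-q^{r-s}}{q^r-1}.
\]
Dividing the identity by ${r\brack s}_q$ shows that the average of $w(B)$ over the $s$-dimensional subspaces $B\le A$ equals $\frac{q^r-q^{r-s}}{q^r-1}\,w(A)$.

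For the first inequality, choose $A$ with $w(A)=d_r(C)$. Every $B\le A$ of dimension $s$ is an $s$-dimensional subcode of $C$, so $w(B)\ge d_s(C)$, and hence the average is at least $d_s(C)$. This gives $(q^r-q^{r-s})d_r\ge(q^r-1)d_s$. For the second inequality, choose $A$ with $w(A)=\delta_r(C)$. Each $w(B)\le\delta_s(C)$, so the average is at most $\delta_s(C)$, which gives $(q^r-q^{r-s})\delta_r\le(q^r-1)\delta_s$.

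The only step needing care is the Gaussian-coefficient ratio. I would verify it from the product formula ${r\brack s}_q=\prod_{i=0}^{s-1}\frac{q^{r-i}-1}{q^{s-i}-1}$. Comparing the products for $r$ and $r-1$, the factors telescope, and only $(q^{r-s}-1)/(q^r-1)$ survives. Everything else is a direct consequence of the double count.
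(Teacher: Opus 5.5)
Your proposal is correct and follows the paper's own proof. The paper also fixes an $r$-dimensional subcode $A$ and notes that each coordinate of $\mathrm{supp}(A)$ gives a nonzero functional whose kernel hyperplane contains exactly ${r-1\brack s}_q$ of the ${r\brack s}_q$ subspaces of dimension $s$. It concludes that the average $s$-dimensional support weight is $(q^r-q^{r-s})w(A)/(q^r-1)$, and then specializes to $w(A)=d_r(C)$ and $w(A)=\delta_r(C)$; your explicit double count and telescoping check of the Gaussian-coefficient ratio fill in details the paper leaves implicit.
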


\begin{proof}
Fix an $r$-dimensional subcode $A$. Each coordinate in $\mathrm{supp}(A)$ defines a nonzero linear functional on $A$, whose kernel is a hyperplane. Among the ${r\brack s}_q$ subspaces of dimension $s$, exactly ${r-1\brack s}_q$ are contained in this kernel. Thus, the proportion of $s$-dimensional subspaces on which the functional is nonzero is $(q^r-q^{r-s})/(q^r-1)$. It follows that the average support weight of the $s$-dimensional subcodes of $A$ is $(q^r-q^{r-s})w(A)/(q^r-1)$. Taking $w(A)=d_r(C)$ gives the first inequality, while taking $w(A)=\delta_r(C)$ gives the second.
\end{proof}

The following elementary calculation allows the adjacent inequalities to be iterated.

\begin{lemma}\label{lem:adjacent-arithmetic}
For every nonnegative integer $x$ and every integer $2\le r\le k$, we have
\[
g_q^{(r)}\left(k,\left\lceil\frac{(q^r-1)x}{q^r-q}\right\rceil\right)=g_q^{(r-1)}(k,x),\qquad a_q^{(r)}\left(k,\left\lfloor\frac{(q^r-1)x}{q^r-q}\right\rfloor\right)=a_q^{(r-1)}(k,x).
\]
\end{lemma}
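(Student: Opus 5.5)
The plan is to rewrite the rounded argument so that the shifted rounding identity \eqref{eq:shifted-rounding} of Lemma~\ref{lem:rounding} applies term by term. After that, the statement reduces to reindexing the sum in \eqref{eq:griesmer-functions}.

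\textbf{Step 1: rewrite $y$.} Put $D=q{r-1\brack 1}_q$, which is a positive integer because $r\ge 2$. The identity ${r\brack 1}_q=q{r-1\brack 1}_q+1$ gives
\[
\frac{q^r-1}{q^r-q}=\frac{(q-1){r\brack 1}_q}{q(q-1){r-1\brack 1}_q}=\frac{D+1}{D}.
\]
Hence, with $y=\left\lceil (q^r-1)x/(q^r-q)\right\rceil$, we have
\[
y=\left\lceil x+\frac{x}{D}\right\rceil=x+\left\lceil\frac{x}{D}\right\rceil,
\]
since $x$ is an integer.

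\textbf{Step 2: simplify each summand.} For $1\le i\le k-r$, apply the first identity in \eqref{eq:shifted-rounding} with $m=q^i$. Since $q^i{r\brack 1}_q=m(D+1)$, this gives
\[
\left\lceil\frac{y}{q^i{r\brack 1}_q}\right\rceil=\left\lceil\frac{x+\lceil x/D\rceil}{q^i(D+1)}\right\rceil=\left\lceil\frac{x}{q^iD}\right\rceil=\left\lceil\frac{x}{q^{i+1}{r-1\brack 1}_q}\right\rceil.
\]

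\textbf{Step 3: reassemble.} Combining Steps 1 and 2,
\[
g_q^{(r)}(k,y)=x+\left\lceil\frac{x}{q{r-1\brack 1}_q}\right\rceil+\sum_{i=2}^{k-r+1}\left\lceil\frac{x}{q^{i}{r-1\brack 1}_q}\right\rceil .
\]
The middle term is exactly the $i=1$ term of $g_q^{(r-1)}(k,x)$. The upper limit $k-r+1=k-(r-1)$ is the correct one for $g_q^{(r-1)}$. So the right-hand side equals $g_q^{(r-1)}(k,x)$.

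\textbf{The floor case.} This is identical, using $\lfloor x(D+1)/D\rfloor=x+\lfloor x/D\rfloor$ and the floor identity in \eqref{eq:shifted-rounding}.

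The only real point is recognizing that ${r\brack 1}_q=D+1$ with $D=q{r-1\brack 1}_q$. Once that is seen, no step presents an obstacle.
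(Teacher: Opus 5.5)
Your proposal is correct and follows the paper's proof essentially step for step. Both rewrite the argument as $x+\lceil x/D\rceil$ with $D=q{r-1\brack 1}_q$, apply the shifted rounding identity of Lemma~\ref{lem:rounding} with $m=q^i$ to each summand, and then reindex the sum. Your explicit reassembly, including the check of the upper limit $k-(r-1)$, is a welcome bit of extra detail that the paper leaves to the reader.
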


\begin{proof}
Since ${r\brack 1}_q=q{r-1\brack 1}_q+1$, we have
\[
\left\lceil\frac{(q^r-1)x}{q^r-q}\right\rceil=x+\left\lceil\frac{x}{q{r-1\brack 1}_q}\right\rceil,\qquad \left\lfloor\frac{(q^r-1)x}{q^r-q}\right\rfloor=x+\left\lfloor\frac{x}{q{r-1\brack 1}_q}\right\rfloor.
\]
Applying Lemma~\ref{lem:rounding} with $D=q{r-1\brack 1}_q$ shows that for every $i\ge1$,
\[
\left\lceil\frac{x+\left\lceil x/(q{r-1\brack 1}_q)\right\rceil}{q^i{r\brack 1}_q}\right\rceil=\left\lceil\frac{x}{q^{i+1}{r-1\brack 1}_q}\right\rceil,\qquad \left\lfloor\frac{x+\left\lfloor x/(q{r-1\brack 1}_q)\right\rfloor}{q^i{r\brack 1}_q}\right\rfloor=\left\lfloor\frac{x}{q^{i+1}{r-1\brack 1}_q}\right\rfloor.
\]
Substituting these identities into \eqref{eq:griesmer-functions} gives the two formulas.
\end{proof}

Taking $s=r-1$ in Proposition~\ref{prop:support-comparison} and iterating gives the two bounds at once.

\begin{theorem}\label{thm:hierarchy}
Let $C$ be an $[n,k]_q$ code of full length. Then the following inequalities hold:
\begin{align*}
n=g_q^{(k)}(k,d_k)&\ge g_q^{(k-1)}(k,d_{k-1})
\ge\cdots\ge g_q^{(1)}(k,d_1),\\
n=a_q^{(k)}(k,\delta_k)&\le a_q^{(k-1)}(k,\delta_{k-1})
\le\cdots\le a_q^{(1)}(k,\delta_1).
\end{align*}
Therefore, for every $1\le r\le k$,
\begin{align}
n&\ge d_r+\sum_{i=1}^{k-r}\left\lceil\frac{d_r}{q^i{r\brack 1}_q}\right\rceil,\label{eq:generalized-griesmer}\\
n&\le \delta_r+\sum_{i=1}^{k-r}\left\lfloor\frac{\delta_r}{q^i{r\brack 1}_q}\right\rfloor.\label{eq:generalized-antigriesmer}
\end{align}
\end{theorem}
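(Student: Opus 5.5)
The plan is to combine Proposition~\ref{prop:support-comparison} with $s=r-1$ and Lemma~\ref{lem:adjacent-arithmetic}, using the monotonicity of $g_q^{(r)}(k,\cdot)$ and $a_q^{(r)}(k,\cdot)$ in their second argument. The end points come first. Since $C$ has full length, $d_k=\delta_k=n$. In \eqref{eq:griesmer-functions} the sum is empty for $r=k$, so $g_q^{(k)}(k,n)=a_q^{(k)}(k,n)=n$. This gives the two leftmost equalities.

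For the chain of inequalities, fix $2\le r\le k$. Proposition~\ref{prop:support-comparison} with $s=r-1$ reads
\[
(q^r-q)d_r\ge(q^r-1)d_{r-1},\qquad (q^r-q)\delta_r\le(q^r-1)\delta_{r-1}.
\]
Because $d_r$ and $\delta_r$ are integers, this yields
\[
d_r\ge\left\lceil\frac{(q^r-1)d_{r-1}}{q^r-q}\right\rceil,\qquad \delta_r\le\left\lfloor\frac{(q^r-1)\delta_{r-1}}{q^r-q}\right\rfloor.
\]
Both $g_q^{(r)}(k,x)$ and $a_q^{(r)}(k,x)$ are nondecreasing in $x$, since each is a sum of $x$ and nondecreasing ceilings or floors of $x$. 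Applying $g_q^{(r)}(k,\cdot)$ to the first inequality and then Lemma~\ref{lem:adjacent-arithmetic} with $x=d_{r-1}$ gives
\[
g_q^{(r)}(k,d_r)\ge g_q^{(r)}\left(k,\left\lceil\frac{(q^r-1)d_{r-1}}{q^r-q}\right\rceil\right)=g_q^{(r-1)}(k,d_{r-1}).
\]
Applying $a_q^{(r)}(k,\cdot)$ to the second inequality and using the floor version of Lemma~\ref{lem:adjacent-arithmetic} gives $a_q^{(r)}(k,\delta_r)\le a_q^{(r-1)}(k,\delta_{r-1})$.

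Letting $r$ run from $k$ down to $2$ produces both chains. Reading off the term $g_q^{(r)}(k,d_r)$ in the first chain gives $n\ge g_q^{(r)}(k,d_r)$, which is \eqref{eq:generalized-griesmer}. The same reading of the second chain gives $n\le a_q^{(r)}(k,\delta_r)$, which is \eqref{eq:generalized-antigriesmer}.

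No step is really an obstacle. The only point needing care is that monotonicity must be applied before Lemma~\ref{lem:adjacent-arithmetic}. That lemma is stated only with the exact rounded value $\lceil (q^r-1)x/(q^r-q)\rceil$, or its floor, in the second argument. So one first replaces $d_r$ or $\delta_r$ by this rounded quantity using monotonicity, and the integrality of $d_r$ and $\delta_r$ is what justifies the rounding.
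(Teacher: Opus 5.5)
Your proposal is correct and follows essentially the same route as the paper. Both arguments take Proposition~\ref{prop:support-comparison} with $s=r-1$, round using integrality, apply monotonicity of $g_q^{(r)}(k,\cdot)$ and $a_q^{(r)}(k,\cdot)$ and then Lemma~\ref{lem:adjacent-arithmetic}, and anchor both chains at $d_k=\delta_k=n$; the paper cites strict monotonicity, but nondecreasing monotonicity, as you note, is all that is needed here.
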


\begin{proof}
Substituting $s=r-1$ into Proposition~\ref{prop:support-comparison} yields
\[
d_r\ge\left\lceil\frac{(q^r-1)d_{r-1}}{q^r-q}\right\rceil,\qquad \delta_r\le\left\lfloor\frac{(q^r-1)\delta_{r-1}}{q^r-q}\right\rfloor.
\]
Since both functions in \eqref{eq:griesmer-functions} are strictly increasing in their last variable, Lemma~\ref{lem:adjacent-arithmetic} gives
\[
g_q^{(r)}(k,d_r)\ge g_q^{(r-1)}(k,d_{r-1}),\qquad a_q^{(r)}(k,\delta_r)\le a_q^{(r-1)}(k,\delta_{r-1}).
\]
The two chains start at $n$, since $d_k=\delta_k=n$.
\end{proof}

The two chains also show that equality at one level propagates to every higher level.

\begin{corollary}\label{cor:equality-propagation}
If $C$ is an $r$-th generalized Griesmer code, then it is an $h$-th generalized Griesmer code for every $r\le h\le k$. Similarly, if $C$ is an $r$-th generalized antiGriesmer code, then it is an $h$-th generalized antiGriesmer code for every $r\le h\le k$.
\end{corollary}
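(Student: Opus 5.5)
The plan is to read the corollary directly off the two monotone chains in Theorem~\ref{thm:hierarchy}. For the Griesmer half, assume $C$ is an $r$-th generalized Griesmer code, so $n=g_q^{(r)}(k,d_r)$. Theorem~\ref{thm:hierarchy} gives the chain
\[
n=g_q^{(k)}(k,d_k)\ge g_q^{(k-1)}(k,d_{k-1})\ge\cdots\ge g_q^{(h)}(k,d_h)\ge\cdots\ge g_q^{(r)}(k,d_r)=n
\]
for any $h$ with $r\le h\le k$. The two ends are both equal to $n$, so every intermediate term is squeezed to $n$. In particular $n=g_q^{(h)}(k,d_h)$, which is exactly the statement that $C$ is an $h$-th generalized Griesmer code.

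The antiGriesmer half is the same argument with the inequalities reversed. If $n=a_q^{(r)}(k,\delta_r)$, then the second chain of Theorem~\ref{thm:hierarchy} reads
\[
n=a_q^{(k)}(k,\delta_k)\le a_q^{(k-1)}(k,\delta_{k-1})\le\cdots\le a_q^{(r)}(k,\delta_r)=n,
\]
so again every intermediate term equals $n$, and hence $n=a_q^{(h)}(k,\delta_h)$ for every $h$ with $r\le h\le k$.

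The only points to check are bookkeeping. First, the definitions of generalized Griesmer and antiGriesmer codes presuppose full length, and full length is also the hypothesis of Theorem~\ref{thm:hierarchy}, so the theorem applies. Second, the chain indices run from $k$ down to $1$, so the interval $[r,k]$ is contained in it. Third, the endpoint $h=k$ is trivially fine, because $g_q^{(k)}(k,d_k)=d_k=n$ and $a_q^{(k)}(k,\delta_k)=\delta_k=n$ (the sums are empty). I do not expect any step to be a real obstacle; all the work is already contained in Theorem~\ref{thm:hierarchy}.
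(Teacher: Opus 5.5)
Your proposal is correct and follows essentially the same route as the paper: each term $g_q^{(h)}(k,d_h)$ (respectively $a_q^{(h)}(k,\delta_h)$) with $r\le h\le k$ is squeezed between the two ends of the corresponding chain in Theorem~\ref{thm:hierarchy}, both of which equal $n$. Your bookkeeping remarks on full length and the endpoint $h=k$ are accurate.
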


\begin{proof}
For the Griesmer bound, each term $g_q^{(h)}(k,d_h)$ with $h\ge r$ lies between $g_q^{(r)}(k,d_r)=n$ and $g_q^{(k)}(k,d_k)=n$. Hence, every such term is equal to $n$. The same argument applies to the antiGriesmer chain.
\end{proof}

When equality holds, the adjacent inequalities are exact. We shall use the resulting formulas in Section~\ref{sec:residuals}.

\begin{corollary}\label{cor:exact-recursion}
If $C$ is an $r$-th generalized Griesmer code, then, for $r<h\le k$,
\begin{equation}\label{eq:griesmer-recursion}
d_h=\left\lceil\frac{(q^h-1)d_{h-1}}{q^h-q}\right\rceil.
\end{equation}
If $C$ is an $r$-th generalized antiGriesmer code, then, for $r<h\le k$,
\begin{equation}\label{eq:antigriesmer-recursion}
\delta_h=\left\lfloor\frac{(q^h-1)\delta_{h-1}}{q^h-q}\right\rfloor.
\end{equation}
\end{corollary}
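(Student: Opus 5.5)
Assume $C$ is an $r$-th generalized Griesmer code. By Corollary~\ref{cor:equality-propagation}, $g_q^{(h)}(k,d_h)=n$ for every $r\le h\le k$, and in particular $g_q^{(h)}(k,d_h)=g_q^{(h-1)}(k,d_{h-1})$ for $r<h\le k$. Fix such an $h$ and put $x=d_{h-1}$ and $y=\lceil (q^h-1)x/(q^h-q)\rceil$. The proof of Theorem~\ref{thm:hierarchy}, which uses Proposition~\ref{prop:support-comparison} with $s=h-1$, gives $d_h\ge y$. Lemma~\ref{lem:adjacent-arithmetic} gives $g_q^{(h)}(k,y)=g_q^{(h-1)}(k,x)$. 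Combining these,
\[
g_q^{(h)}(k,d_h)=g_q^{(h-1)}(k,x)=g_q^{(h)}(k,y),
\]
with $d_h\ge y$.

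It remains to show that $g_q^{(h)}(k,\cdot)$ is strictly increasing. The function $x\mapsto x$ is strictly increasing, and every ceiling summand is nondecreasing, so $x<x'$ implies $g_q^{(h)}(k,x)<g_q^{(h)}(k,x')$. Since $d_h\ge y$ and the two values agree, we conclude $d_h=y$, which is \eqref{eq:griesmer-recursion}.

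The antiGriesmer case is symmetric. Corollary~\ref{cor:equality-propagation} gives $a_q^{(h)}(k,\delta_h)=n=a_q^{(h-1)}(k,\delta_{h-1})$. Proposition~\ref{prop:support-comparison} gives $\delta_h\le z:=\lfloor (q^h-1)\delta_{h-1}/(q^h-q)\rfloor$, and Lemma~\ref{lem:adjacent-arithmetic} gives $a_q^{(h)}(k,z)=a_q^{(h-1)}(k,\delta_{h-1})$. Here $a_q^{(h)}(k,\cdot)$ is strictly increasing because the floor summands are nondecreasing. Hence $\delta_h=z$, which is \eqref{eq:antigriesmer-recursion}.

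No step is a real obstacle. The only point needing care is to use strict monotonicity of $g$ and $a$ rather than mere monotonicity, and this holds because the leading term is $x$ itself.
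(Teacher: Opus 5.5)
Your proposal is correct and follows the paper's proof essentially step for step. Both arguments use Corollary~\ref{cor:equality-propagation} and Lemma~\ref{lem:adjacent-arithmetic} to get equal values of $g_q^{(h)}(k,\cdot)$ (resp.\ $a_q^{(h)}(k,\cdot)$) at $d_h$ and at the rounded ratio, take the inequality between the two arguments from Proposition~\ref{prop:support-comparison} with $s=h-1$, and conclude by strict monotonicity.
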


\begin{proof}
Assume that $C$ is an $r$-th generalized Griesmer code. Corollary~\ref{cor:equality-propagation} and Lemma~\ref{lem:adjacent-arithmetic} give
\[
g_q^{(h)}(k,d_h)
=g_q^{(h)}\left(k,\left\lceil\frac{(q^h-1)d_{h-1}}{q^h-q}\right\rceil\right)
=n.
\]
Proposition~\ref{prop:support-comparison} gives the reverse inequality between the two arguments. Since $g_q^{(h)}(k,x)$ is strictly increasing in $x$, the arguments are equal, and \eqref{eq:griesmer-recursion} follows. Replacing $d_i$, $g_q^{(h)}$, and ceilings by $\delta_i$, $a_q^{(h)}$, and floors gives \eqref{eq:antigriesmer-recursion}.
\end{proof}

\section{The complement argument}\label{sec:complements}

We next consider complements in repeated simplex codes. Chen and Xie \cite{ChenXie2025} studied simplex complementary codes of projective linear codes, and Xie, Chen, Ding, and Li \cite{XieChenDingLi2026} determined their $r$-dimensional subcode support weight distributions. Kurz, Landjev, and Rousseva \cite[Section~2]{KurzLandjevRousseva2026} stated the corresponding relation for projective multisets in the form $\mathcal{K}'=s-\mathcal{K}$. We give the formulas below and then deduce the equivalence of the two bounds.

\begin{theorem}\label{thm:support-distribution-reflection}
Let $C$ be an $[n,k]_q$ code of full length. Let $\lambda\ge\gamma(\mathcal{M}_C)$, and suppose that $\mathcal{M}_C^{[\lambda]}$ is spanning. Then $n(C^{[\lambda]})=\lambda{k\brack 1}_q-n$. Moreover, for every $1\leq r\leq k$ and every $j$,
\[
B_j^{(r)}(C^{[\lambda]})=B_{\lambda q^{k-r}{r\brack 1}_q-j}^{(r)}(C).
\]
In particular,
\[
d_r(C^{[\lambda]})=\lambda q^{k-r}{r\brack 1}_q-\delta_r(C),\qquad \delta_r(C^{[\lambda]})=\lambda q^{k-r}{r\brack 1}_q-d_r(C).
\]
\end{theorem}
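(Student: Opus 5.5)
The plan is to work entirely on the projective side, through the support formula \eqref{eq:geometric-support}. First, the effective length. By construction $C^{[\lambda]}$ is the full-length code associated with the spanning multiset $\mathcal{M}_C^{[\lambda]}$, so its effective length is $|\mathcal{M}_C^{[\lambda]}|$. Since $C$ has full length, $|\mathcal{M}_C|=n$, and therefore
\[
|\mathcal{M}_C^{[\lambda]}|=\lambda{k\brack 1}_q-n.
\]

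For the distributions, I would fix generator matrices $G$ of $C$ and $G'$ of $C^{[\lambda]}$, both with $k$ rows and with column multisets $\mathcal{M}_C$ and $\mathcal{M}_C^{[\lambda]}$ in the same $\mathrm{PG}(k-1,q)$. Because each of these codes has dimension $k$, the map $U\mapsto\{xG:x\in U\}$ is a bijection from the $r$-dimensional subspaces of $\mathbb{F}_q^k$ to the $r$-dimensional subcodes of $C$, and the same holds for $G'$. Let $S_U=\mathcal{P}\setminus\mathrm{PG}(U^\perp)$, which has $q^{k-r}{r\brack 1}_q$ points. By \eqref{eq:geometric-support},
\[
w(\{xG':x\in U\})=\mathcal{M}_C^{[\lambda]}(S_U)=\lambda|S_U|-\mathcal{M}_C(S_U)=\lambda q^{k-r}{r\brack 1}_q-w(\{xG:x\in U\}).
\]
So the common parametrization by $U$ gives a bijection between $r$-dimensional subcodes of $C^{[\lambda]}$ of weight $j$ and $r$-dimensional subcodes of $C$ of weight $\lambda q^{k-r}{r\brack 1}_q-j$. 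This is exactly the identity for $B_j^{(r)}$.

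The formulas for $d_r$ and $\delta_r$ follow because $j\mapsto\lambda q^{k-r}{r\brack 1}_q-j$ reverses order. The smallest nonzero index of the distribution of $C^{[\lambda]}$ therefore corresponds to the largest nonzero index of the distribution of $C$, and conversely. The only point needing care is that \eqref{eq:geometric-support} must be applied to both codes with the same coordinate system on $\mathbb{F}_q^k$. This is legitimate because the correspondence between spanning multisets and codes is only up to monomial equivalence, and monomial equivalence preserves all support weights. I do not expect any real obstacle.
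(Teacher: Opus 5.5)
Your proposal is correct and follows essentially the same route as the paper. It parametrizes the $r$-dimensional subcodes of both $C$ and $C^{[\lambda]}$ by the same $r$-dimensional subspaces $U\le\mathbb{F}_q^k$, uses \eqref{eq:geometric-support} to show the two support weights sum to $\lambda q^{k-r}{r\brack 1}_q$, and then swaps minimum and maximum. Your remarks on the bijection $U\mapsto\{xG:x\in U\}$ (rank $k$ from spanning) and on monomial equivalence only make explicit what the paper leaves implicit.
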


\begin{proof}
Summing the multiplicities over the ${k\brack 1}_q$ points of $\mathrm{PG}(k-1,q)$ gives the formula for $n(C^{[\lambda]})$. Let $U\le\mathbb{F}_q^k$ be an $r$-dimensional subspace. By \eqref{eq:geometric-support} and the point count above,
\begin{align*}
\mathcal{M}_C^{[\lambda]}\bigl(\mathcal{P}\setminus\mathrm{PG}(U^\perp)\bigr)
=\sum_{P\notin\mathrm{PG}(U^\perp)}\bigl(\lambda-\mathcal{M}_C(P)\bigr)=\lambda q^{k-r}{r\brack 1}_q
-\mathcal{M}_C\bigl(\mathcal{P}\setminus\mathrm{PG}(U^\perp)\bigr).
\end{align*}
Each $r$-dimensional subspace $U$ of $\mathbb{F}_q^k$ gives an $r$-dimensional subcode of both $C$ and $C^{[\lambda]}$, and the two support weights sum to $\lambda q^{k-r}{r\brack 1}_q$. This proves the identity for $B_j^{(r)}$. Taking the minimum and maximum gives the last two formulas.
\end{proof}

The next identity relates the two numerical bounds.

\begin{lemma}\label{lem:complement-arithmetic}
Let $x$ be a nonnegative integer. If $\lambda q^{k-r}{r\brack 1}_q-x\ge0$, then
\begin{equation*}
a_q^{(r)}\left(k,\lambda q^{k-r}{r\brack 1}_q-x\right)
=\lambda{k\brack 1}_q-g_q^{(r)}(k,x).
\end{equation*}
\end{lemma}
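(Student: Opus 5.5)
We expand both sides termwise using the definitions in \eqref{eq:griesmer-functions}. Put $N=\lambda q^{k-r}{r\brack 1}_q$ and $D_i=q^i{r\brack 1}_q$ for $1\le i\le k-r$. The key observation is that $N/D_i=\lambda q^{k-r-i}$ is an integer for every $i$ in this range. Hence, for each such $i$,
\[
\left\lfloor\frac{N-x}{D_i}\right\rfloor=\lambda q^{k-r-i}+\left\lfloor\frac{-x}{D_i}\right\rfloor=\lambda q^{k-r-i}-\left\lceil\frac{x}{D_i}\right\rceil,
\]
using the elementary identity $\lfloor -y\rfloor=-\lceil y\rceil$. The hypothesis $N-x\ge0$ is needed only so that $a_q^{(r)}(k,N-x)$ is defined, since it is a function of a nonnegative integer.

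Summing over $i$ gives
\[
a_q^{(r)}(k,N-x)=N-x+\lambda\sum_{i=1}^{k-r}q^{k-r-i}-\sum_{i=1}^{k-r}\left\lceil\frac{x}{D_i}\right\rceil
=\lambda\Bigl(q^{k-r}{r\brack 1}_q+{k-r\brack 1}_q\Bigr)-g_q^{(r)}(k,x).
\]
The last step uses $\sum_{i=1}^{k-r}q^{k-r-i}=\sum_{j=0}^{k-r-1}q^j={k-r\brack 1}_q$.

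It remains to check the identity $q^{k-r}{r\brack 1}_q+{k-r\brack 1}_q={k\brack 1}_q$. This is the point count already recorded after \eqref{eq:geometric-support}, namely ${k\brack 1}_q-{k-r\brack 1}_q=q^{k-r}{r\brack 1}_q$. It can also be verified directly:
\[
\frac{q^{k-r}(q^r-1)+(q^{k-r}-1)}{q-1}=\frac{q^k-1}{q-1}.
\]

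There is no real obstacle in this argument. The only point requiring care is the integrality of $N/D_i$, which is what allows the floor of $N-x$ over $D_i$ to split cleanly. This integrality holds precisely because $i$ ranges only up to $k-r$. The case $r=k$ needs no separate treatment: the sums are then empty, and the identity reduces to $N-x=\lambda{k\brack 1}_q-x$ with $N=\lambda{k\brack 1}_q$.
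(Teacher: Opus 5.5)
Your proof is correct and is essentially the paper's argument. Both use the integrality of $\lambda q^{k-r}{r\brack 1}_q/(q^i{r\brack 1}_q)=\lambda q^{k-r-i}$ to split each floor as $\lambda q^{k-r-i}-\lceil x/(q^i{r\brack 1}_q)\rceil$, and then finish with the identity ${k\brack 1}_q=q^{k-r}{r\brack 1}_q+\sum_{i=1}^{k-r}q^{k-r-i}$.
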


\begin{proof}
For any index $1\le i\le k-r$, the ratio
\[
\frac{\lambda q^{k-r}{r\brack 1}_q}{q^i{r\brack 1}_q}
=\lambda q^{k-r-i}
\]
is an integer. Hence,
\[
\left\lfloor
\frac{\lambda q^{k-r}{r\brack 1}_q-x}{q^i{r\brack 1}_q}
\right\rfloor
=\lambda q^{k-r-i}
-\left\lceil\frac{x}{q^i{r\brack 1}_q}\right\rceil.
\]
Using the identity
\[
{k\brack 1}_q
=q^{k-r}{r\brack 1}_q+\sum_{i=1}^{k-r}q^{k-r-i}
\]
and substituting into \eqref{eq:griesmer-functions} completes the proof.
\end{proof}

The complement formula and the preceding identity give the equivalence of the two bounds.

\begin{theorem}\label{thm:bound-equivalence}
Fix $q$, $k$ and $r$. The generalized Griesmer inequality for all full-length $[n,k]_q$ codes is equivalent to the generalized antiGriesmer inequality for all full-length $[n,k]_q$ anticodes.
\end{theorem}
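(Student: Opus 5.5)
The plan is to prove both implications by passing to complements in a repeated simplex code, using Theorem~\ref{thm:support-distribution-reflection} together with Lemma~\ref{lem:complement-arithmetic}. Fix $q,k,r$. Suppose first that the generalized Griesmer inequality \eqref{eq:generalized-griesmer} holds for every full-length $[n',k]_q$ code. Let $C$ be a full-length $[n,k]_q$ anticode. We choose $\lambda=\gamma(\mathcal{M}_C)+1$. Then $\lambda>\gamma(\mathcal{M}_C)$, so $\mathcal{M}_C^{[\lambda]}$ has positive multiplicity at every point, and hence it is spanning. Its code $C^{[\lambda]}$ is therefore a full-length code of dimension $k$ and length $n'=\lambda{k\brack 1}_q-n$. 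By Theorem~\ref{thm:support-distribution-reflection}, $d_r(C^{[\lambda]})=\lambda q^{k-r}{r\brack 1}_q-\delta_r(C)$. Applying the assumed Griesmer inequality to $C^{[\lambda]}$ gives
\[
\lambda{k\brack 1}_q-n\ge g_q^{(r)}\bigl(k,\lambda q^{k-r}{r\brack 1}_q-\delta_r(C)\bigr).
\]

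Next we rewrite the right-hand side with Lemma~\ref{lem:complement-arithmetic}. We take $x=\lambda q^{k-r}{r\brack 1}_q-\delta_r(C)\ge0$; this is a nonnegative integer because it is a support weight. Then $\lambda q^{k-r}{r\brack 1}_q-x=\delta_r(C)\ge0$, and the lemma gives $g_q^{(r)}(k,x)=\lambda{k\brack 1}_q-a_q^{(r)}(k,\delta_r(C))$. Substituting this in and cancelling $\lambda{k\brack 1}_q$ yields $n\le a_q^{(r)}(k,\delta_r(C))$, which is \eqref{eq:generalized-antigriesmer}.

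The converse is symmetric. Let $C$ be a full-length $[n,k]_q$ code, again take $\lambda=\gamma(\mathcal{M}_C)+1$, and apply the assumed antiGriesmer inequality to $C^{[\lambda]}$. This time we use $\delta_r(C^{[\lambda]})=\lambda q^{k-r}{r\brack 1}_q-d_r(C)$ and apply Lemma~\ref{lem:complement-arithmetic} with $x=d_r(C)$. The hypothesis of the lemma holds because $d_r(C)\le q^{k-r}{r\brack 1}_q\gamma(\mathcal{M}_C)$, since the support of a subcode lies over at most $q^{k-r}{r\brack 1}_q$ points, each of multiplicity at most $\gamma(\mathcal{M}_C)$. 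The inequality $\lambda{k\brack 1}_q-n\le\lambda{k\brack 1}_q-g_q^{(r)}(k,d_r(C))$ then gives \eqref{eq:generalized-griesmer}.

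The main point needing care is the choice of $\lambda$. If we took $\lambda=\gamma(\mathcal{M}_C)$, the complement might fail to be spanning, and the inequality for dimension $k$ would not apply to it. Choosing $\lambda$ strictly larger than $\gamma(\mathcal{M}_C)$ removes this problem, and it costs nothing, because Lemma~\ref{lem:complement-arithmetic} holds for every $\lambda$ satisfying the nonnegativity condition. Beyond this, we only need to check that the hypotheses of the lemma hold, which is done above.
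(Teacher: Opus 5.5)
Your proposal is correct and follows the paper's proof essentially step for step: you choose $\lambda>\gamma(\mathcal{M}_C)$ so the complement is spanning, transfer $d_r$ and $\delta_r$ via Theorem~\ref{thm:support-distribution-reflection}, and convert between $g_q^{(r)}$ and $a_q^{(r)}$ with Lemma~\ref{lem:complement-arithmetic}. Your explicit check of the lemma's nonnegativity hypothesis in each direction is a small addition that the paper leaves implicit.
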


\begin{proof}
Assume first that the generalized Griesmer inequality holds. Let $C$ be a full-length code and choose an integer $\lambda>\gamma(\mathcal{M}_C)$. Then the complement is spanning. By Theorem~\ref{thm:support-distribution-reflection}, we have
\[
n(C^{[\lambda]})=\lambda{k\brack 1}_q-n,\qquad d_r(C^{[\lambda]})=\lambda q^{k-r}{r\brack 1}_q-\delta_r(C).
\]
Applying the generalized Griesmer inequality to $C^{[\lambda]}$ and then using Lemma~\ref{lem:complement-arithmetic}, we obtain
\begin{align*}
\lambda{k\brack 1}_q-n
\ge g_q^{(r)}\left(k,\lambda q^{k-r}{r\brack 1}_q-\delta_r(C)\right)=\lambda{k\brack 1}_q-a_q^{(r)}(k,\delta_r(C)).
\end{align*}
This is the generalized antiGriesmer inequality for $C$.

Conversely, assume the generalized antiGriesmer inequality and apply it to the same complement. By Theorem~\ref{thm:support-distribution-reflection} and Lemma~\ref{lem:complement-arithmetic},
\begin{align*}
\lambda{k\brack 1}_q-n
\le a_q^{(r)}\left(k,\lambda q^{k-r}{r\brack 1}_q-d_r(C)\right)=\lambda{k\brack 1}_q-g_q^{(r)}(k,d_r(C)).
\end{align*}
This gives $n\geq g_q^{(r)}(k,d_r(C))$ and proves the converse implication.
\end{proof}

The same calculation also gives the equality cases.

\begin{corollary}\label{cor:equality-correspondence}
Under the hypotheses of Theorem~\ref{thm:support-distribution-reflection},
\begin{align*}
a_q^{(r)}\bigl(k,\delta_r(C^{[\lambda]})\bigr)-n(C^{[\lambda]})
&=n-g_q^{(r)}(k,d_r(C)),\\
n(C^{[\lambda]})-g_q^{(r)}\bigl(k,d_r(C^{[\lambda]})\bigr)
&=a_q^{(r)}(k,\delta_r(C))-n.
\end{align*}
Therefore, $C$ is an $r$-th generalized Griesmer code if and only if $C^{[\lambda]}$ is an $r$-th generalized antiGriesmer code. Similarly, $C$ is an $r$-th generalized antiGriesmer code if and only if $C^{[\lambda]}$ is an $r$-th generalized Griesmer code.
\end{corollary}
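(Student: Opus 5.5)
The plan is to combine Theorem~\ref{thm:support-distribution-reflection} with Lemma~\ref{lem:complement-arithmetic} and rearrange. Write $N=\lambda{k\brack 1}_q$ and $T=\lambda q^{k-r}{r\brack 1}_q$. By Theorem~\ref{thm:support-distribution-reflection} we have
\[
n(C^{[\lambda]})=N-n,\qquad \delta_r(C^{[\lambda]})=T-d_r(C),\qquad d_r(C^{[\lambda]})=T-\delta_r(C).
\]
The nonnegativity hypothesis of Lemma~\ref{lem:complement-arithmetic} is needed in both applications. It holds with $x=d_r(C)$ because $T-d_r(C)=\delta_r(C^{[\lambda]})\ge0$. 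It holds with $x=\delta_r(C)$ because $T-\delta_r(C)=d_r(C^{[\lambda]})\ge0$.

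For the first identity, apply Lemma~\ref{lem:complement-arithmetic} with $x=d_r(C)$:
\[
a_q^{(r)}\bigl(k,\delta_r(C^{[\lambda]})\bigr)=a_q^{(r)}(k,T-d_r(C))=N-g_q^{(r)}(k,d_r(C)).
\]
Subtracting $n(C^{[\lambda]})=N-n$ gives $n-g_q^{(r)}(k,d_r(C))$, as claimed.

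For the second identity, the lemma only expresses $a$ of a complemented argument, so rewrite it rather than $g$. Apply Lemma~\ref{lem:complement-arithmetic} with $x=\delta_r(C)$ to get
\[
a_q^{(r)}\bigl(k,d_r(C^{[\lambda]})\bigr)=N-g_q^{(r)}(k,\delta_r(C)).
\]
This is not directly the needed form. It is simpler to apply the lemma to the code $C^{[\lambda]}$ itself, whose complement in $\lambda\mathcal{P}$ is $\mathcal{M}_C$. That requires $\lambda\ge\gamma(\mathcal{M}_C^{[\lambda]})$, which is automatic since all multiplicities of the complement are at most $\lambda$. We also need $\mathcal{M}_C$ to be spanning, which holds because $C$ has full length. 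Hence the first identity, applied with the roles of $C$ and $C^{[\lambda]}$ exchanged, gives
\[
a_q^{(r)}(k,\delta_r(C))-n=n(C^{[\lambda]})-g_q^{(r)}\bigl(k,d_r(C^{[\lambda]})\bigr),
\]
which is the second identity. The main point needing care is this symmetry step, namely checking that $(C^{[\lambda]})^{[\lambda]}$ corresponds to $\mathcal{M}_C$ and that the hypotheses are symmetric. Alternatively, the identity $a_q^{(r)}(k,T-x)=N-g_q^{(r)}(k,x)$ can be solved for $g$ with $x=T-\delta_r(C)$, which gives the second identity directly.

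For the equivalences, $C$ is an $r$-th generalized Griesmer code exactly when the right side of the first identity vanishes. This happens exactly when $n(C^{[\lambda]})=a_q^{(r)}(k,\delta_r(C^{[\lambda]}))$, that is, when $C^{[\lambda]}$, which has full length, is an $r$-th generalized antiGriesmer code. The second identity gives the other equivalence in the same way.
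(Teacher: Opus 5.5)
Your proof is correct and follows the paper's route: substitute the formulas of Theorem~\ref{thm:support-distribution-reflection} and apply Lemma~\ref{lem:complement-arithmetic}. For the second identity, your alternative with $x=d_r(C^{[\lambda]})=\lambda q^{k-r}{r\brack 1}_q-\delta_r(C)$ is exactly that direct substitution, so the symmetry detour and the unused application with $x=\delta_r(C)$ can be dropped, and the equivalences follow as you state.
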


\begin{proof}
Both identities follow by substituting the formulas in Theorem~\ref{thm:support-distribution-reflection} and then applying Lemma~\ref{lem:complement-arithmetic}.
\end{proof}

\subsection{A divisibility consequence}\label{sec:divisibility}

We conclude this section with a consequence for divisible codes. If the constant weight of the repeated simplex code is divisible by $\Delta$, then complementation preserves $\Delta$-divisibility.

\begin{proposition}\label{prop:complement-divisibility}
Let $C$ be a full-length $[n,k]_q$ code, let $\lambda\geq\gamma(\mathcal{M}_C)$, and suppose that $\mathcal{M}_C^{[\lambda]}$ is spanning. If $\Delta\mid\lambda q^{k-1}$, then $C$ is $\Delta$-divisible if and only if its complement $C^{[\lambda]}$ is $\Delta$-divisible.
\end{proposition}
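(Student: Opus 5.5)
The plan is to apply Theorem~\ref{thm:support-distribution-reflection} with $r=1$. The one-dimensional subcodes of $C$ are the lines $\langle xG\rangle$ for $0\ne x\in\mathbb{F}_q^k$, and every nonzero codeword of such a line has the weight of the line. Each nonzero $x$ also determines a one-dimensional subcode of $C^{[\lambda]}$. In the proof of Theorem~\ref{thm:support-distribution-reflection}, the two support weights attached to the same subspace $U=\langle x\rangle$ sum to $\lambda q^{k-1}{1\brack 1}_q=\lambda q^{k-1}$. Writing $G^{[\lambda]}$ for a generator matrix of $C^{[\lambda]}$ built from $\mathcal{M}_C^{[\lambda]}$, this gives
\[
\mathrm{wt}(xG)+\mathrm{wt}(xG^{[\lambda]})=\lambda q^{k-1}\qquad\text{for every }0\ne x\in\mathbb{F}_q^k.
\]
The reason is that, by \eqref{eq:geometric-support}, both weights count multiplicities outside the hyperplane $\mathrm{PG}(x^\perp)$, which has $q^{k-1}$ points. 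The two multiplicities at each point sum to $\lambda$.

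Since $x\mapsto xG$ and $x\mapsto xG^{[\lambda]}$ are both bijections from $\mathbb{F}_q^k$ onto the respective codes (both codes have dimension $k$ because the multisets are spanning), this pairs every nonzero codeword of $C$ with a nonzero codeword of $C^{[\lambda]}$. Now suppose $\Delta\mid\lambda q^{k-1}$. Then $\Delta\mid\mathrm{wt}(xG)$ holds if and only if $\Delta\mid\lambda q^{k-1}-\mathrm{wt}(xG)=\mathrm{wt}(xG^{[\lambda]})$. The zero codeword has weight $0$ in both codes. Ranging over all $x$ therefore gives both directions at once.

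The only point needing care is the identification of codeword weights with subcode support weights. This holds because $w(\langle c\rangle)=\mathrm{wt}(c)$, and the correspondence is canonical via the common parameter $x$. Even without fixing generator matrices, the distribution identity $B_j^{(1)}(C^{[\lambda]})=B_{\lambda q^{k-1}-j}^{(1)}(C)$ suffices. The set of weights occurring in $C^{[\lambda]}$ is exactly $\{\lambda q^{k-1}-j\}$ as $j$ ranges over the weights occurring in $C$, and divisibility by $\Delta$ is preserved under $j\mapsto\lambda q^{k-1}-j$. I do not expect a genuine obstacle here.
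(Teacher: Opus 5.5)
Your proposal is correct and follows the paper's proof: both rest on the identity $\mathrm{wt}(xG_C)+\mathrm{wt}(xG_{C^{[\lambda]}})=\lambda q^{k-1}$ for every nonzero $x\in\mathbb{F}_q^k$. The paper states this identity directly, while you derive it from the $r=1$ case of Theorem~\ref{thm:support-distribution-reflection} via \eqref{eq:geometric-support}. One wording fix: it is the complement $\mathcal{P}\setminus\mathrm{PG}(x^\perp)$ that has $q^{k-1}$ points; the hyperplane $\mathrm{PG}(x^\perp)$ itself has ${k-1\brack 1}_q$ points.
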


\begin{proof}
Choose generator matrices $G_C$ and $G_{C^{[\lambda]}}$ from the complementary projective multisets. For every nonzero $x\in\mathbb{F}_q^k$, the corresponding codewords satisfy
\[
\mathrm{wt}(xG_C)+\mathrm{wt}(xG_{C^{[\lambda]}})=\lambda q^{k-1}.
\]
Since $\Delta$ divides the right-hand side, the two weights are simultaneously divisible by $\Delta$.
\end{proof}

Combining this observation with \cite[Theorems~1.13 and 1.14]{DengHuangXiang2026} yields the following result.

\begin{corollary}\label{cor:antigriesmer-divisibility}
Let $q=p^f$, where $p$ is prime and $f\geq1$. Let $e$ be a nonnegative integer, and let $C$ be an antiGriesmer code with parameters $[a_q(k,\delta),k,\delta]_q$.
\begin{enumerate}
\item If $q^e\mid\delta$, then $C$ is $p^e$-divisible.
\item If $p^e\mid\delta$, then $C$ is $\left\lceil p^{e-(f-1)(q-2)}\right\rceil$-divisible.
\end{enumerate}
\end{corollary}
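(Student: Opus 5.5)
The plan is to pass to a complement in a repeated simplex code and then apply the cited divisibility results for Griesmer codes. Let $C$ be an antiGriesmer code with parameters $[a_q(k,\delta),k,\delta]_q$, and fix an integer $\lambda>\gamma(\mathcal{M}_C)$, so that $\mathcal{M}_C^{[\lambda]}$ is spanning. Corollary~\ref{cor:equality-correspondence} with $r=1$ shows that $C^{[\lambda]}$ is a Griesmer code. By Theorem~\ref{thm:support-distribution-reflection}, its minimum distance is $d(C^{[\lambda]})=\lambda q^{k-1}-\delta$.

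The freedom in $\lambda$ is then used to control divisibility. We choose $\lambda$ to be a multiple of a suitable power of $p$ that also exceeds $\gamma(\mathcal{M}_C)$; for instance, $\lambda=q^{e}t$ with $t$ large. Then $q^e\mid\lambda q^{k-1}$, so $q^e\mid\delta$ implies $q^e\mid d(C^{[\lambda]})$. Likewise, $p^e\mid\delta$ implies $p^e\mid d(C^{[\lambda]})$.

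Next we invoke \cite[Theorems~1.13 and 1.14]{DengHuangXiang2026}. In the form I expect them to take, they say that a Griesmer code whose minimum distance is divisible by $q^e$ is $p^e$-divisible, and one whose minimum distance is divisible by $p^e$ is $\lceil p^{e-(f-1)(q-2)}\rceil$-divisible. This gives the corresponding divisibility of $C^{[\lambda]}$.

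Finally, we transfer divisibility back to $C$ with Proposition~\ref{prop:complement-divisibility}. This needs $\Delta\mid\lambda q^{k-1}$, where $\Delta$ is $p^e$ or $\lceil p^{e-(f-1)(q-2)}\rceil$. Both are powers of $p$ that divide $p^e$, which divides $q^e$ and hence $\lambda$. The ceiling equals $1$ when the exponent is nonpositive, and that case is trivial. So Proposition~\ref{prop:complement-divisibility} yields that $C$ is $\Delta$-divisible.

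The main obstacle is matching the exact hypotheses of the cited theorems. They may require $d$ to be positive, or impose conditions such as $k\ge 2$ or a specific form of the Griesmer parameters. The complement $C^{[\lambda]}$ is full-length with positive minimum distance, since it is spanning and $\lambda q^{k-1}>\delta$, so these conditions should be met. Choosing $\lambda$ as a large multiple of $q^e$ makes every divisibility hypothesis hold automatically.
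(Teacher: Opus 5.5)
Your proposal is correct and follows essentially the same route as the paper. Both pass to the complement $C^{[\lambda]}$, which is a Griesmer code with $d=\lambda q^{k-1}-\delta$ by Corollary~\ref{cor:equality-correspondence}, apply \cite[Theorems~1.13 and 1.14]{DengHuangXiang2026}, and transfer divisibility back via Proposition~\ref{prop:complement-divisibility}. The only difference is that you take $\lambda$ to be a multiple of $q^e$ in both parts, whereas the paper only requires $p^e\mid\lambda$ in part~(2); this changes nothing.
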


\begin{proof}
Assume first that $q^e$ divides $\delta$. Choose a positive integer $\lambda>\gamma(\mathcal{M}_C)$ divisible by $q^e$, and put $D=C^{[\lambda]}$. By Corollary~\ref{cor:equality-correspondence} with $r=1$, the code $D$ is a Griesmer code. Its minimum distance is $d=\lambda q^{k-1}-\delta$, which is divisible by $q^e$. By \cite[Theorem~1.13]{DengHuangXiang2026}, $D$ is $p^e$-divisible. Since $p^e$ divides $\lambda q^{k-1}$, Proposition~\ref{prop:complement-divisibility} shows that $C$ is also $p^e$-divisible.

Now assume that $p^e$ divides $\delta$. Choose $\lambda>\gamma(\mathcal{M}_C)$ divisible by $p^e$. Then $D=C^{[\lambda]}$ is a Griesmer code with minimum distance $d=\lambda q^{k-1}-\delta$, and $p^e$ divides $d$. Put $\Delta=\left\lceil p^{e-(f-1)(q-2)}\right\rceil$. By \cite[Theorem~1.14]{DengHuangXiang2026}, $D$ is $\Delta$-divisible. If $e-(f-1)(q-2)<0$, then $\Delta=1$. Otherwise, $\Delta=p^{e-(f-1)(q-2)}$ divides $p^e$, and hence divides $\lambda q^{k-1}$. Proposition~\ref{prop:complement-divisibility} proves the second statement.
\end{proof}

\begin{remark}
The preceding corollary uses divisibility results for ordinary
Griesmer codes, corresponding to the case $r=1$. For $r>1$, equality
in the generalized Griesmer bound is expressed in terms of the support
weights of $r$-dimensional subcodes, whereas divisibility is a condition
on the weights of individual codewords. It is therefore natural to ask
whether an $r$-th generalized Griesmer code must
satisfy any corresponding divisibility condition. The complement
construction leads to the analogous question for $r$-th generalized
antiGriesmer codes.
\end{remark}

\section{The residual argument and equality cases}\label{sec:residuals}

The residual proof below is given by Tsfasman and Vl\u{a}du\c{t} \cite[Proposition~3.2]{TsfasmanVladut1995}. They apply it to the generalized Griesmer bound in \cite[Corollary~3.5]{TsfasmanVladut1995}. We write the construction in terms of residual codes, determine the equality case, and then give the analogous argument for maximum support weights.

\subsection{Minimum support weights}

We first consider minimum support weights. Here the puncturing kernel is exactly the chosen subcode, so the generalized Hamming weights of the residual code can be compared directly with those of the original code.

\begin{lemma}\label{lem:residual-minimum}
Let $A\le C$ have dimension $h<k$ and support weight $w$. If $w<d_{h+1}(C)$, then $\ker(\pi_A)=A$ and $\dim\mathrm{Res}(C,A)=k-h$. Moreover, $d_i\bigl(\mathrm{Res}(C,A)\bigr)\geq d_{i+h}(C)-w$ for all $1\leq i\leq k-h$.
The same conclusions hold when $w=d_h(C)$.
\end{lemma}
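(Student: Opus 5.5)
The proof rests on one observation: for any subcode $B$ of $C$ containing $A$, the puncturing $\pi_A$ deletes exactly the coordinates in $\mathrm{supp}(A)\subseteq\mathrm{supp}(B)$. Hence
\[
w(\pi_A(B))=w(B)-w.
\]
The plan is to determine the kernel of $\pi_A$ first, and then get the weight bounds by pulling subcodes of the residual code back to $C$.

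\emph{The kernel.} Clearly $A\le\ker(\pi_A)$, since every codeword of $A$ vanishes outside $\mathrm{supp}(A)$. Conversely, take $c\in\ker(\pi_A)$. Then $\mathrm{supp}(c)\subseteq\mathrm{supp}(A)$. If $c\notin A$, the subcode $A+\langle c\rangle$ has dimension $h+1$ and support equal to $\mathrm{supp}(A)$, so
\[
d_{h+1}(C)\le w.
\]
This contradicts the hypothesis $w<d_{h+1}(C)$, so $\ker(\pi_A)=A$. Since $\pi_A$ is surjective onto $\mathrm{Res}(C,A)$, rank--nullity gives $\dim\mathrm{Res}(C,A)=k-h$. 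Deleting identically zero coordinates does not affect any of this.

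\emph{The weight bounds.} Let $D\le\mathrm{Res}(C,A)$ have dimension $i$, and put $B=\pi_A^{-1}(D)$. Because $\ker(\pi_A)=A$, the subcode $B$ contains $A$ and has dimension $i+h$. Every coordinate in $\mathrm{supp}(B)\setminus\mathrm{supp}(A)$ is a surviving coordinate on which some element of $B$, and hence of $D=\pi_A(B)$, is nonzero. Therefore
\[
w(D)=w(B)-w\ge d_{i+h}(C)-w.
\]
Taking $D$ of minimum support gives $d_i(\mathrm{Res}(C,A))\ge d_{i+h}(C)-w$.

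\emph{The case $w=d_h(C)$.} By Wei's strict monotonicity $d_h<d_{h+1}$, quoted in the preliminaries, this case satisfies $w<d_{h+1}(C)$. So it is covered by the argument above.

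The only step that needs any care is the kernel identification, and there the key is that adjoining $c$ to $A$ does not enlarge the support.
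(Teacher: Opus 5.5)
Your proposal is correct and follows essentially the same route as the paper. You identify the kernel through the same contradiction with $d_{h+1}(C)$, and you bound $d_i$ by pulling an $i$-dimensional subcode back to an $(h+i)$-dimensional subcode containing $A$. Your full preimage $\pi_A^{-1}(D)$ is exactly the paper's $A+B$, and you record the support relation as the equality $w(D)=w(B)-w$ where the paper only uses the inequality $w(A+B)\le w+w(B')$.
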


\begin{proof}
The kernel of $\pi_A$ consists of the codewords supported in $\mathrm{supp}(A)$, and it contains $A$. If its dimension were at least $h+1$, then it would contain an $(h+1)$-dimensional subcode with support weight at most $w<d_{h+1}(C)$, a contradiction. Thus, $\ker(\pi_A)=A$.

Let $B'\leq\mathrm{Res}(C,A)$ be an $i$-dimensional subcode, and choose an $i$-dimensional subspace $B\leq\pi_A^{-1}(B')$ complementary to $A$. Then $\dim(A+B)=h+i$ and $w(A+B)\leq w+w(B')$. Hence, $d_{h+i}(C)\leq w+w(B')$. Taking the minimum over $B'$ proves the inequality. The last statement follows from $d_h(C)<d_{h+1}(C)$.
\end{proof}

For $r=1$, this is the usual residual step in the proof of the Griesmer bound. The same induction works for every $r$.

\begin{proposition}\label{prop:residual-proof-griesmer}
Every full-length $[n,k]_q$ code satisfies the generalized Griesmer inequality \eqref{eq:generalized-griesmer}.
\end{proposition}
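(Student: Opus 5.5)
We argue by induction on $k-r$, following the residual construction of Tsfasman and Vl\u{a}du\c{t}. If $r=k$, then $d_k=n$ because $C$ has full length, and \eqref{eq:generalized-griesmer} reads $n\ge n$. So assume $r<k$, and choose an $r$-dimensional subcode $A\le C$ with $w(A)=d_r(C)$. Put $R=\mathrm{Res}(C,A)$. By the last statement of Lemma~\ref{lem:residual-minimum}, we have $\dim R=k-r$. The length of $R$ is at most $n-d_r$; it equals $n-d_r$ because $C$ has full length and residual codes are written with their effective lengths. The aim is to show
\[
n-d_r\ge\sum_{i=1}^{k-r}\left\lceil\frac{d_r}{q^i{r\brack 1}_q}\right\rceil .
\]

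The key step is a lower bound for the minimum distance of $R$. Lemma~\ref{lem:residual-minimum} with $i=1$ gives
\[
d_1(R)\ge d_{r+1}(C)-d_r(C).
\]
Proposition~\ref{prop:support-comparison} with $s=r$ and $r+1$ in place of $r$ gives
\[
(q^{r+1}-q)\,d_{r+1}\ge(q^{r+1}-1)\,d_r .
\]
Since $q^{r+1}-1=q{r\brack 1}_q\cdot(q-1)+(q-1)$, we have $(q^{r+1}-1)/(q^{r+1}-q)=1+1/(q{r\brack 1}_q)$. Therefore
\[
d_{r+1}\ge d_r+\left\lceil\frac{d_r}{q{r\brack 1}_q}\right\rceil,
\qquad\text{so}\qquad
d_1(R)\ge\left\lceil\frac{d_r}{q{r\brack 1}_q}\right\rceil=:e .
\]

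Now apply the ordinary Griesmer bound to $R$, which is the case $r=1$ with dimension $k-r$. This case is available either by Theorem~\ref{thm:hierarchy} or by the classical residual induction on dimension, which the same lemma supplies for $r=1$. Since $g_q$ is increasing in its last variable,
\[
n-d_r\ge g_q(k-r,e)=\sum_{j=0}^{k-r-1}\left\lceil\frac{e}{q^j}\right\rceil .
\]
By \eqref{eq:nested-rounding}, each term satisfies
\[
\left\lceil \frac{e}{q^j}\right\rceil=\left\lceil\frac{d_r}{q^{j+1}{r\brack 1}_q}\right\rceil .
\]
Reindexing with $i=j+1$ gives exactly the required sum, which proves the claim.

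The main obstacle is keeping the argument self-contained rather than circular through Theorem~\ref{thm:hierarchy}. To avoid this, one runs the $r=1$ Griesmer bound by its own induction on $k$ using Lemma~\ref{lem:residual-minimum} with $h=1$. In that step one also needs $d_1(\mathrm{Res}(C,c))\ge\lceil d/q\rceil$, which requires the standard averaging argument over the $q$ codewords $c'+\alpha c$, or equivalently the case $r=1$ of Proposition~\ref{prop:support-comparison}. The remaining care points are confirming that the residual code has effective length exactly $n-d_r$ and that its dimension does not drop; both are settled by the lemma and by full length.
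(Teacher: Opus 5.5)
Your argument is correct, but it takes a different route from the paper.

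The paper inducts on $r$. It picks $1\le h<r$ and an $h$-dimensional subcode of support weight $d_h$, and applies the $(r-h)$-th generalized Griesmer bound to the residual code, which has dimension $k-h$. It then compares the ceiling terms using Proposition~\ref{prop:support-comparison} with $s=h$, in the form $(d_r-d_h)/{r-h\brack 1}_q\ge d_r/{r\brack 1}_q$.

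You take $h=r$ instead, so only $d_1(R)$ matters. The adjacent inequality $d_{r+1}\ge d_r+\lceil d_r/(q{r\brack 1}_q)\rceil$, together with \eqref{eq:nested-rounding}, turns the ordinary Griesmer bound for $R$ directly into \eqref{eq:generalized-griesmer}. This is the inequality version of the computation the paper carries out in part~2 of Theorem~\ref{thm:griesmer-residual-structure}.

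Your route uses a single residual step and needs no generalized bound as inductive input. The paper's route works for every $h<r$, and its chain of inequalities is what later yields part~1 of that theorem: in the equality case, $\mathrm{Res}(C,A)$ is an $(r-h)$-th generalized Griesmer code.

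Your worry about circularity is unnecessary. $R$ is used at level $1$ with dimension $k-r$, and $(k-r)-1<k-r$, so your own induction on $k-r$ already supplies that case. The proof is therefore self-contained. For $r=1$ it is the classical residual step, so it even reproves the ordinary Griesmer bound that the paper takes as its base case.

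One small slip: the fact $d_2\ge d_1+\lceil d_1/q\rceil$ is the case $s=1$, $r=2$ of Proposition~\ref{prop:support-comparison}, not the case $r=1$, which is vacuous.
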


\begin{proof}
We argue by induction on $r$. The case $r=1$ is the ordinary Griesmer bound. Let $r\ge2$ and assume the result for smaller values of $r$. If $k=r$, then $n=d_k$.

Choose $1\le h<r$ and an $h$-dimensional subcode $A$ with $w(A)=d_h$, and put $R=\mathrm{Res}(C,A)$. Lemma~\ref{lem:residual-minimum} gives $\dim R=k-h$ and $d_{r-h}(R)\geq d_r-d_h$.
The induction hypothesis applied to $R$ gives
\begin{align*}
n-d_h
\ge g_q^{(r-h)}\bigl(k-h,d_{r-h}(R)\bigr)\ge d_r-d_h+\sum_{i=1}^{k-r}
\left\lceil\frac{d_r-d_h}{q^i{r-h\brack 1}_q}\right\rceil.
\end{align*}
Proposition~\ref{prop:support-comparison}, applied with $s=h$, gives $d_h\leq(q^r-q^{r-h})d_r/(q^r-1)$. Hence,
\begin{equation}\label{eq:residual-minimum-ratio}
\frac{d_r-d_h}{{r-h\brack 1}_q}
\ge\frac{d_r}{{r\brack 1}_q}.
\end{equation}
Applying \eqref{eq:residual-minimum-ratio} to each ceiling term and adding $d_h$ completes the proof.
\end{proof}

If equality holds in the generalized Griesmer bound, the same inequalities determine the parameters of the residual code. For $r=1$, see \cite[Theorem~3.3(1)]{DengHuangXiang2026}.

\begin{theorem}\label{thm:griesmer-residual-structure}
Let $C$ be an $r$-th generalized Griesmer code.
\begin{enumerate}
\item For $1\le h<r$, let $A\le C$ be an $h$-dimensional subcode with support weight $d_h$. The residual code $R=\mathrm{Res}(C,A)$ has dimension $k-h$ and satisfies
\[
n(R)=g_q^{(r-h)}(k-h,d_r-d_h),\qquad d_{r-h}(R)=d_r-d_h.
\]
Thus, $R$ is an $(r-h)$-th generalized Griesmer code.
\item If $r<k$, let $A\le C$ be an $r$-dimensional subcode with support weight $d_r$. The resulting residual code $R=\mathrm{Res}(C,A)$ has parameters
\[
\bigl[g_q(k-r,d_{r+1}-d_r),\ k-r,\ d_{r+1}-d_r\bigr]_q
\]
and is a Griesmer code.
\end{enumerate}
\end{theorem}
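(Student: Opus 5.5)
For part 1 we squeeze the chain of inequalities from the proof of Proposition~\ref{prop:residual-proof-griesmer}. Lemma~\ref{lem:residual-minimum}, with $w=d_h$, gives $\dim R=k-h$ and $d_{r-h}(R)\ge d_r-d_h$. Since $R$ has full length by convention, the generalized Griesmer bound (Theorem~\ref{thm:hierarchy}) applied to $R$, together with the monotonicity of $g_q^{(r-h)}(k-h,\cdot)$, gives
\[
n-d_h=n(R)\ge g_q^{(r-h)}\bigl(k-h,d_{r-h}(R)\bigr)\ge g_q^{(r-h)}(k-h,d_r-d_h).
\]
Next we use the ratio bound \eqref{eq:residual-minimum-ratio}, which comes from Proposition~\ref{prop:support-comparison}. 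It gives, for each $1\le i\le k-r$,
\[
\left\lceil\frac{d_r-d_h}{q^i{r-h\brack 1}_q}\right\rceil\ge\left\lceil\frac{d_r}{q^i{r\brack 1}_q}\right\rceil .
\]
Hence $g_q^{(r-h)}(k-h,d_r-d_h)\ge g_q^{(r)}(k,d_r)-d_h=n-d_h$, where the last equality is the hypothesis. The indexing matches: $(k-h)-(r-h)=k-r$, so both sums have the same number of terms. The chain therefore begins and ends at $n-d_h$, and every inequality in it is an equality. In particular $n(R)=g_q^{(r-h)}(k-h,d_r-d_h)$, and $g_q^{(r-h)}(k-h,d_{r-h}(R))=g_q^{(r-h)}(k-h,d_r-d_h)$. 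Strict monotonicity in the last variable then forces $d_{r-h}(R)=d_r-d_h$. So $R$ attains equality and is an $(r-h)$-th generalized Griesmer code.

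For part 2, Lemma~\ref{lem:residual-minimum} with $h=r$ and $w=d_r$ gives $\dim R=k-r$ and $d_1(R)\ge d_{r+1}-d_r$. By the ordinary Griesmer bound,
\[
n-d_r=n(R)\ge g_q(k-r,d_1(R))\ge g_q(k-r,d_{r+1}-d_r).
\]
By Corollary~\ref{cor:equality-propagation}, $C$ is also an $(r+1)$-th generalized Griesmer code. By Corollary~\ref{cor:exact-recursion}, $d_{r+1}=\lceil (q^{r+1}-1)d_r/(q^{r+1}-q)\rceil$, which by the computation in Lemma~\ref{lem:adjacent-arithmetic} equals $d_r+\lceil d_r/(q{r\brack 1}_q)\rceil$. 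Put $y=\lceil d_r/(q{r\brack 1}_q)\rceil=d_{r+1}-d_r$. The nested rounding identity \eqref{eq:nested-rounding} then gives $\lceil y/q^{j}\rceil=\lceil d_r/(q^{j+1}{r\brack 1}_q)\rceil$. Therefore
\[
g_q(k-r,y)=y+\sum_{j=1}^{k-r-1}\left\lceil\frac{y}{q^j}\right\rceil=\sum_{i=1}^{k-r}\left\lceil\frac{d_r}{q^i{r\brack 1}_q}\right\rceil=n-d_r .
\]
Again the chain collapses, and strict monotonicity yields $d_1(R)=d_{r+1}-d_r$ and $n(R)=g_q(k-r,d_{r+1}-d_r)$. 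So $R$ is a Griesmer code with the stated parameters.

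The only delicate step is the arithmetic identity $g_q(k-r,y)=n-d_r$ in part 2. It requires the exact recursion from Corollary~\ref{cor:exact-recursion} and the reindexing via Lemma~\ref{lem:rounding}. We also need to check that $R$ has full length, so that $n(R)$ equals its length, which holds because residual codes are written with their effective lengths.
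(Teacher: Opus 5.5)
Your proposal is correct and follows the paper's proof essentially step for step. In part 1 you squeeze the same chain, built from Lemma~\ref{lem:residual-minimum}, the bound applied to $R$, and \eqref{eq:residual-minimum-ratio}, between $n-d_h$ and $n-d_h$; in part 2 you use the recursion \eqref{eq:griesmer-recursion} with Lemma~\ref{lem:rounding} to get $g_q(k-r,d_{r+1}-d_r)=n-d_r$, and strict monotonicity finishes both parts.

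One small precision: the identity $n(R)=n-w(A)$ holds because $C$ has full length, so no coordinate outside $\mathrm{supp}(A)$ vanishes on all of $C$. The effective-length convention only ensures that $R$ itself is full length, so that the bound applies to it.
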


\begin{proof}
Let $1\le h<r$. By Lemma~\ref{lem:residual-minimum}, the residual code $R$ has dimension $k-h$ and satisfies $d_{r-h}(R)\geq d_r-d_h$.
Applying the generalized Griesmer bound to $R$ and using \eqref{eq:residual-minimum-ratio} gives
\begin{align*}
n-d_h
\ge g_q^{(r-h)}\bigl(k-h,d_{r-h}(R)\bigr)\ge g_q^{(r-h)}(k-h,d_r-d_h)\ge g_q^{(r)}(k,d_r)-d_h
=n-d_h.
\end{align*}
The first and last terms are equal; hence, every inequality is an equality. Since $g_q^{(r-h)}(k-h,x)$ is strictly increasing in $x$, it follows that $d_{r-h}(R)=d_r-d_h$ and $n(R)=g_q^{(r-h)}(k-h,d_r-d_h)$.

Now let $h=r$. Lemma~\ref{lem:residual-minimum} gives $\dim R=k-r$ and $d_1(R)\geq d_{r+1}-d_r$. By \eqref{eq:griesmer-recursion}, $d_{r+1}-d_r=\left\lceil d_r/(q{r\brack 1}_q)\right\rceil$.
Put $D=d_{r+1}-d_r$. Lemma~\ref{lem:rounding} gives
\begin{align*}
n-d_r=\sum_{i=1}^{k-r}\left\lceil\frac{d_r}{q^i{r\brack 1}_q}\right\rceil=D+\sum_{j=1}^{k-r-1}\left\lceil\frac{D}{q^j}\right\rceil
=g_q(k-r,D).
\end{align*}
The standard Griesmer bound applied to $R$ now forces $d_1(R)=D$, since $g_q(k-r,x)$ is strictly increasing in $x$. Thus, $R$ has the stated parameters and is a Griesmer code.
\end{proof}

\subsection{Maximum support weights}

For maximum support weights, the puncturing kernel may be larger than the chosen subcode. Thus, the residual code may have dimension smaller than $k-h$.

\begin{lemma}\label{lem:residual-maximum}
Assume $A\leq C$ has dimension $h$ and support weight $w$. Define $R=\mathrm{Res}(C,A)$ and let $k'=\dim R$. Then $\delta_i(R)\leq\delta_{i+h}(C)-w$ for all $1\leq i\leq k'$.
\end{lemma}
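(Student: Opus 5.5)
We mirror the second half of the proof of Lemma~\ref{lem:residual-minimum}, with the inequality reversed, and avoid any claim about $\ker(\pi_A)$. Put $K=\ker(\pi_A)$. This is the subcode of codewords supported in $\mathrm{supp}(A)$. It contains $A$, and $\dim K=k-k'$, so $\dim K\ge h$. Equivalently, $k'\le k-h$, so every index $i+h$ with $1\le i\le k'$ lies in $\{1,\dots,k\}$ and $\delta_{i+h}(C)$ is defined.

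Now fix $1\le i\le k'$ and an $i$-dimensional subcode $B'\le R$ with $w(B')=\delta_i(R)$. The goal is an $(i+h)$-dimensional subcode $E\le C$ with $w(E)\ge w+w(B')$. The natural choice is $E=A+B$, where $B\le\pi_A^{-1}(B')$ is an $i$-dimensional subspace with $\pi_A(B)=B'$. Such a $B$ exists by lifting a basis of $B'$.

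The key step is to show $A\cap B=0$. If $b\in A\cap B$, then $\pi_A(b)=0$. Since $\pi_A$ restricted to $B$ is a surjection onto the $i$-dimensional space $B'$, it is injective on $B$, so $b=0$. Hence $\dim E=h+i$.

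For the support, $\mathrm{supp}(E)\supseteq\mathrm{supp}(A)$ because $A\le E$. For any coordinate $j\notin\mathrm{supp}(A)$ lying in $\mathrm{supp}(B')$, some $b\in B$ has $b_j\neq0$, and $b\in E$. These two sets of coordinates are disjoint, so
\[
w(E)\ge w+w(B')=w+\delta_i(R).
\]
Therefore $\delta_{i+h}(C)\ge w(E)\ge w+\delta_i(R)$, which is the claim.

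The main obstacle is the point singled out before the lemma: $K$ may be strictly larger than $A$. This is why we only use that $B$ maps isomorphically onto $B'$, which gives $A\cap B=0$ without needing a complement of $K$. We also need that the deleted identically-zero coordinates do not affect $w(B')$. This holds because support weights are unchanged by deleting zero coordinates. The choice $i\le k'$ guarantees that $B'$ exists.
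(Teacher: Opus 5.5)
Your proof is correct and is essentially the paper's argument: lift $B'$ to an $i$-dimensional $B\le\pi_A^{-1}(B')$ with $B\cap\ker(\pi_A)=0$, so that $\dim(A+B)=h+i$ and $w(A+B)\ge w+w(B')$, which gives $\delta_{i+h}(C)\ge w+w(B')$. Your lift of a basis of $B'$ is exactly the paper's choice of an $i$-dimensional subspace of $\pi_A^{-1}(B')$ complementary to $\ker(\pi_A)$, so you are describing that complement rather than avoiding it. Your explicit check that $k'\le k-h$, which guarantees that $\delta_{i+h}(C)$ is defined, is a small point the paper leaves implicit.
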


\begin{proof}
Let $K=\ker(\pi_A)$, so $A\le K$. For an $i$-dimensional subcode $B'\le R$, choose an $i$-dimensional subspace $B\le\pi_A^{-1}(B')$ complementary to $K$. Then $\dim(A+B)=h+i$. Since every coordinate in $\mathrm{supp}(A)$ is nonzero on some codeword of $A$, we have $w(A+B)=w+w(B')$. Hence, $w+w(B')\leq\delta_{h+i}(C)$. Taking the maximum over $B'$ proves the result.
\end{proof}

For the ordinary antiGriesmer bound, we shall use the following residual estimate.

\begin{lemma}\label{lem:maximum-codeword-residual}
Let $c\in C$ be a codeword of weight $\delta_1(C)=\delta$, and define $R=\mathrm{Res}(C,c)$. If $R\ne0$, then $\delta_1(R)\leq\left\lfloor\delta/q\right\rfloor$.
\end{lemma}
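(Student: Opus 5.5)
Let $S=\mathrm{supp}(c)$, so $|S|=\delta$. Since $R\ne0$, pick a codeword $y\in C$ whose restriction $y'=\pi_c(y)$ to the complement of $S$ realizes $\delta_1(R)$, that is, $\mathrm{wt}(y')=\delta_1(R)=:t$, with $t>0$. The plan is to consider the $q$ codewords $y+\alpha c$ for $\alpha\in\mathbb{F}_q$, all of which have weight at most $\delta$. Each agrees with $y$ outside $S$, so $\mathrm{wt}(y+\alpha c)=t+\mathrm{wt}\bigl((y+\alpha c)|_S\bigr)$. The bound will come from averaging these weights over $\alpha$.

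For each coordinate $j\in S$ we have $c_j\ne0$. Hence $y_j+\alpha c_j=0$ for exactly one value $\alpha=-y_j/c_j$. Summing over $\alpha\in\mathbb{F}_q$, each coordinate of $S$ is therefore nonzero for exactly $q-1$ values of $\alpha$. This gives
\[
\sum_{\alpha\in\mathbb{F}_q}\mathrm{wt}(y+\alpha c)=qt+(q-1)\delta .
\]
Since every term is at most $\delta_1(C)=\delta$ (including $\alpha$ with $y+\alpha c=0$, which cannot occur since $t>0$), the sum is at most $q\delta$. So $qt+(q-1)\delta\le q\delta$, that is, $qt\le\delta$. Because $t$ is an integer, $t\le\lfloor\delta/q\rfloor$.

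The only point needing care is to make sure the averaging uses all $q$ scalars, not just the nonzero ones. The step $\alpha=0$ is included, and the exact count of $q-1$ per coordinate holds precisely because $c_j\neq 0$ on $S$. No further obstacle is expected, and the result does not need Lemma~\ref{lem:residual-maximum}. Alternatively, one could derive it from Proposition~\ref{prop:support-comparison} via a $2$-dimensional subcode, but the direct count above is cleaner.
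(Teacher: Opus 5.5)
Your proof is correct and is essentially the paper's argument. The paper also averages over the full pencil $b-\alpha c$, $\alpha\in\mathbb{F}_q$: it bounds $\mathrm{wt}(b')\le N_\alpha$ for each $\alpha$ and uses $\sum_\alpha N_\alpha=\delta$, which is your identity $\sum_\alpha\mathrm{wt}(y+\alpha c)=qt+(q-1)\delta\le q\delta$ rearranged term by term.
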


\begin{proof}
Assume that $R\ne0$. Then $n(C)>\delta$. Let $b'\in R$ be nonzero and choose a preimage $b\in C$. For any scalar $\alpha\in\mathbb{F}_q$, let $N_\alpha=\#\{j\in\mathrm{supp}(c):b_j=\alpha c_j\}$.
Each vector $b-\alpha c$ must be nonzero because it restricts to $b'$ outside $\mathrm{supp}(c)$. Thus, $\mathrm{wt}(b')+\delta-N_\alpha=\mathrm{wt}(b-\alpha c)\leq\delta$, and hence $\mathrm{wt}(b')\leq N_\alpha$. Summing these quantities over all $\alpha\in\mathbb{F}_q$ gives $q\,\mathrm{wt}(b')\leq\sum_{\alpha\in\mathbb{F}_q}N_\alpha=\delta$.
Taking the maximum over all nonzero $b'\in R$ proves the lemma.
\end{proof}

We now combine the two residual estimates.

\begin{proposition}\label{prop:residual-proof-antigriesmer}
Every $[n,k]_q$ anticode of full length satisfies the generalized antiGriesmer inequality \eqref{eq:generalized-antigriesmer}.
\end{proposition}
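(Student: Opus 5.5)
The plan is to mirror the proof of Proposition~\ref{prop:residual-proof-griesmer}, with induction on $r$. The base case $r=1$ is handled separately using Lemma~\ref{lem:maximum-codeword-residual}.

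\emph{Base case $r=1$.} We induct on $k$. For $k=1$ we have $n=\delta_1$. For $k\ge2$, take $c$ with $\mathrm{wt}(c)=\delta$ and put $R=\mathrm{Res}(C,c)$, with $k'=\dim R\le k-1$. If $R=0$, then $n=\delta\le a_q(k,\delta)$. Otherwise $n=\delta+n(R)$, and induction gives $n(R)\le a_q(k',\delta_1(R))$. Lemma~\ref{lem:maximum-codeword-residual} gives $\delta_1(R)\le\lfloor\delta/q\rfloor$. Since $a_q(k',x)$ is nondecreasing in both $k'$ and $x$ (we only add nonnegative floor terms), we get
\[
n(R)\le a_q(k-1,\lfloor\delta/q\rfloor)=\sum_{i=1}^{k-1}\left\lfloor\frac{\delta}{q^i}\right\rfloor
\]
by the nested floor identity \eqref{eq:nested-rounding}. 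Adding $\delta$ gives $n\le a_q(k,\delta)$.

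\emph{Inductive step $r\ge2$.} If $k=r$, then $n=\delta_k$ and we are done. Otherwise, choose $1\le h<r$ (say $h=r-1$, or any $h$) and an $h$-dimensional subcode $A$ with $w(A)=\delta_h$. Put $R=\mathrm{Res}(C,A)$ with $k'=\dim R\le k-h$, so that $n=\delta_h+n(R)$. Note that $r-h$ may exceed $k'$. If $k'<r-h$, I would instead bound $n(R)=\delta_{k'}(R)\le\delta_{k'+h}(C)-\delta_h\le\delta_r-\delta_h$ using Lemma~\ref{lem:residual-maximum} and monotonicity of $\delta$. This gives $n\le\delta_r\le a_q^{(r)}(k,\delta_r)$ directly. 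If $k'\ge r-h$, induction on $r$ applied to $R$ gives $n(R)\le a_q^{(r-h)}(k',\delta_{r-h}(R))$. Lemma~\ref{lem:residual-maximum} gives $\delta_{r-h}(R)\le\delta_r-\delta_h$. Monotonicity in $k'$ and in $x$ then gives
\[
n-\delta_h\le \delta_r-\delta_h+\sum_{i=1}^{k-r}\left\lfloor\frac{\delta_r-\delta_h}{q^i{r-h\brack 1}_q}\right\rfloor.
\]

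\emph{Comparing the floor terms.} It remains to show
\[
\frac{\delta_r-\delta_h}{{r-h\brack 1}_q}\le\frac{\delta_r}{{r\brack 1}_q}.
\]
This is equivalent to $\delta_h\ge\delta_r\bigl(1-{r-h\brack 1}_q/{r\brack 1}_q\bigr)=\delta_r(q^r-q^{r-h})/(q^r-1)$, which is exactly the second inequality of Proposition~\ref{prop:support-comparison} with $s=h$. Substituting it into each floor term and adding $\delta_h$ yields \eqref{eq:generalized-antigriesmer}.

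The main obstacle is the dimension drop of the residual code. Unlike Lemma~\ref{lem:residual-minimum}, the kernel of $\pi_A$ may exceed $A$. This is why I rely on the monotonicity of $a_q^{(\cdot)}$ in $k$ and treat the degenerate case $k'<r-h$ separately. For $r=1$ it is also why the sharper codeword estimate of Lemma~\ref{lem:maximum-codeword-residual} is needed: Lemma~\ref{lem:residual-maximum} only relates $\delta_1(R)$ to $\delta_2-\delta_1$, which is not a bound of the form $\lfloor\delta/q\rfloor$.
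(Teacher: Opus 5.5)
Your proof is correct and follows the paper's argument essentially step for step: the base case goes through Lemma~\ref{lem:maximum-codeword-residual} with induction on $k$, and the step $r\ge2$ uses induction on $r$ with the degenerate case $k'<r-h$ split off. The only difference is that the paper fixes $h=1$ (the residual code of a maximum-weight codeword) and applies Proposition~\ref{prop:support-comparison} with $s=1$, whereas you allow any $1\le h<r$ via strong induction; also, your closing remark is wrong, since Lemma~\ref{lem:residual-maximum} does suffice for $r=1$: Proposition~\ref{prop:support-comparison} with $(r,s)=(2,1)$ gives $q\delta_2\le(q+1)\delta_1$, hence $\delta_1(R)\le\delta_2-\delta_1\le\lfloor\delta_1/q\rfloor$, so Lemma~\ref{lem:maximum-codeword-residual} is a convenience rather than a necessity.
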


\begin{proof}
We first prove the case $r=1$ by induction on $k$. If $k=1$, then $n=\delta_1$. Let $c$ be a codeword of weight $\delta_1$, put $R=\mathrm{Res}(C,c)$, and write $k'=\dim R\le k-1$. If $k'=0$, then $n=\delta_1$. If $k'>0$, Lemma~\ref{lem:maximum-codeword-residual}, the induction hypothesis, and Lemma~\ref{lem:rounding} give
\begin{align*}
n-\delta_1\le a_q\left(k',\left\lfloor\frac{\delta_1}{q}\right\rfloor\right)=\sum_{i=1}^{k'}\left\lfloor\frac{\delta_1}{q^i}\right\rfloor
\le\sum_{i=1}^{k-1}\left\lfloor\frac{\delta_1}{q^i}\right\rfloor.
\end{align*}
This recovers the ordinary antiGriesmer bound.

Now let $r\ge2$ and proceed by induction on $r$. Choose a codeword $c$ of weight $\delta_1$ and let $R=\mathrm{Res}(C,c)$ have dimension $k'$. If $k'=0$, then $n=\delta_1\le\delta_r$. If $0<k'<r-1$, then the full length of $R$ and Lemma~\ref{lem:residual-maximum} give $n-\delta_1=\delta_{k'}(R)\leq\delta_{k'+1}(C)-\delta_1\leq\delta_r-\delta_1$. Hence, $n\leq\delta_r$, which is stronger than the required inequality.

It remains to consider $k'\ge r-1$. The induction hypothesis and Lemma~\ref{lem:residual-maximum} give
\[
n-\delta_1\le a_q^{(r-1)}\bigl(k',\delta_{r-1}(R)\bigr)\le a_q^{(r-1)}(k',\delta_r-\delta_1).
\]
Taking $s=1$ in Proposition~\ref{prop:support-comparison} gives
\begin{equation*}
\frac{\delta_r-\delta_1}{{r-1\brack 1}_q}
\le\frac{\delta_r}{{r\brack 1}_q}.
\end{equation*}
Since $k'-(r-1)\le k-r$, we have
\begin{align*}
n-\delta_1\le\delta_r-\delta_1
+\sum_{i=1}^{k'-(r-1)}
\left\lfloor\frac{\delta_r-\delta_1}{q^i{r-1\brack 1}_q}\right\rfloor\le\delta_r-\delta_1
+\sum_{i=1}^{k-r}
\left\lfloor\frac{\delta_r}{q^i{r\brack 1}_q}\right\rfloor.
\end{align*}
Adding $\delta_1$ to both sides completes the proof.
\end{proof}

Since the puncturing kernel may be larger than the chosen subcode, the equality statement must use the actual dimension of the residual code.

\begin{theorem}\label{thm:antigriesmer-residual-structure}
Let $C$ be an $r$-th generalized antiGriesmer code.
\begin{enumerate}
\item Let $1\le h<r$, and let $A\le C$ be an $h$-dimensional subcode with support weight $\delta_h$. Put $R=\mathrm{Res}(C,A)$, $k'=\dim R$, and $m=\min\{k',r-h\}$.
If $R$ is trivial ($R=0$), then $n=\delta_h=\delta_r$. If $R\ne0$, then $n(R)=a_q^{(m)}(k',\delta_r-\delta_h)$ and $\delta_m(R)=\delta_r-\delta_h$.
Thus, $R$ is an $m$-th generalized antiGriesmer code.
\item Suppose that $r<k$. Let $A\le C$ be an $r$-dimensional subcode with support weight $\delta_r$, and suppose that $R=\mathrm{Res}(C,A)$ is nonzero. If $k'=\dim R$, then $R$ has length $a_q(k',\delta_{r+1}-\delta_r)$, dimension $k'$, and maximum Hamming weight $\delta_{r+1}-\delta_r$. Hence, $R$ is an antiGriesmer code.
\end{enumerate}
\end{theorem}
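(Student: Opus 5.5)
The plan is to adapt the chain of inequalities from the proof of Proposition~\ref{prop:residual-proof-antigriesmer} to a general subcode $A$. Under the hypothesis $n=a_q^{(r)}(k,\delta_r)$ the chain starts and ends at the same value, so every inequality in it is an equality. Throughout I will use three facts. First, $C$ has full length and residual codes are written with their effective lengths, so $n(R)=n-w(A)$. Second, $\ker(\pi_A)\ge A$, so $k'\le k-\dim A$. Third, each function $a_q^{(m)}(k',x)$ is strictly increasing in $x$.

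For part~1, fix $h<r$ and $A$ with $w(A)=\delta_h$. If $R=0$, then $n=\delta_h\le\delta_r\le n$, since $\delta_r\le n(C)$. Now suppose $R\ne0$ and $k'<r-h$, so that $m=k'$. Because $R$ has full length, Lemma~\ref{lem:residual-maximum} with $i=k'$ gives
\[
n(R)=\delta_{k'}(R)\le\delta_{k'+h}-\delta_h\le\delta_r-\delta_h .
\]
Hence $n\le\delta_r\le n$, and all these inequalities are equalities. This gives $\delta_{k'}(R)=\delta_r-\delta_h$ and $n(R)=\delta_r-\delta_h=a_q^{(k')}(k',\delta_r-\delta_h)$, because the sum in $a_q^{(k')}(k',\cdot)$ is empty.

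In the main case $k'\ge r-h$ we have $m=r-h$, and I will write the chain
\[
n-\delta_h=n(R)\le a_q^{(r-h)}\bigl(k',\delta_{r-h}(R)\bigr)\le a_q^{(r-h)}(k',\delta_r-\delta_h)\le \delta_r-\delta_h+\sum_{i=1}^{k-r}\left\lfloor\frac{\delta_r}{q^i{r\brack 1}_q}\right\rfloor=n-\delta_h .
\]
The steps are justified as follows.
\begin{enumerate}
\item The first inequality is the antiGriesmer bound (Theorem~\ref{thm:hierarchy} or Proposition~\ref{prop:residual-proof-antigriesmer}) applied to $R$.
\item The second uses Lemma~\ref{lem:residual-maximum} with $i=r-h$, together with monotonicity.
\item The third uses Proposition~\ref{prop:support-comparison} with $s=h$. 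This gives $(\delta_r-\delta_h)/{r-h\brack 1}_q\le\delta_r/{r\brack 1}_q$, exactly as in the proof of Proposition~\ref{prop:residual-proof-antigriesmer}. It also uses $k'-(r-h)\le k-h-(r-h)=k-r$, which lets us extend the sum to $k-r$ terms with nonnegative floors.
\end{enumerate}
All terms in the chain are therefore equal. Equality in the first step gives $n(R)=a_q^{(r-h)}(k',\delta_{r-h}(R))$. Equality in the second step, combined with strict monotonicity, gives $\delta_{r-h}(R)=\delta_r-\delta_h$. Together these yield the claimed formula for $n(R)$.

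For part~2, take $A$ of dimension $r$ with $w(A)=\delta_r$, and assume $R\ne0$. Corollary~\ref{cor:exact-recursion} with $h=r+1$ gives
\[
D:=\delta_{r+1}-\delta_r=\left\lfloor\frac{(q^{r+1}-1)\delta_r}{q^{r+1}-q}\right\rfloor-\delta_r=\left\lfloor\frac{\delta_r}{q{r\brack 1}_q}\right\rfloor,
\]
by the identity in the proof of Lemma~\ref{lem:adjacent-arithmetic}. The floor identity in \eqref{eq:nested-rounding} then gives
\[
n-\delta_r=\sum_{i=1}^{k-r}\left\lfloor\frac{\delta_r}{q^i{r\brack 1}_q}\right\rfloor=D+\sum_{j=1}^{k-r-1}\left\lfloor\frac{D}{q^j}\right\rfloor=a_q(k-r,D).
\]
Lemma~\ref{lem:residual-maximum} with $i=1$ gives $\delta_1(R)\le D$. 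Now I will form the chain
\[
n-\delta_r=n(R)\le a_q\bigl(k',\delta_1(R)\bigr)\le a_q(k',D)\le a_q(k-r,D)=n-\delta_r,
\]
where the last inequality holds because $k'\le k-r$ and the added floor terms are nonnegative. Again every inequality is an equality. Strict monotonicity of $a_q(k',\cdot)$ then forces $\delta_1(R)=D$ and $n(R)=a_q(k',D)$, so $R$ is an antiGriesmer code with the stated parameters.

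I expect the main obstacle to be the bookkeeping caused by $k'$ possibly being smaller than $k-h$. The argument must check that truncating the floor sums only decreases them, so the chain still closes. It must also extract $\delta_m(R)$ from the middle equality rather than from the last one, because the last inequality is not strict in $x$. The arithmetic identity in part~2 is the other point that needs care, but it reduces directly to Lemma~\ref{lem:rounding}.
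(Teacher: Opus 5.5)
Your proof is correct. Part~1 follows the paper's argument: the trivial case, the case $k'<r-h$ via full length of $R$ and Lemma~\ref{lem:residual-maximum}, and the main chain closed by Proposition~\ref{prop:support-comparison} with $s=h$ together with $k'-(r-h)\le k-r$.

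For part~2 you take a different route. You compute $D=\delta_{r+1}-\delta_r=\lfloor\delta_r/(q{r\brack 1}_q)\rfloor$ from Corollary~\ref{cor:exact-recursion}, and you rewrite $n-\delta_r=a_q(k-r,D)$ using Lemma~\ref{lem:rounding}. You then close the chain $n(R)\le a_q(k',\delta_1(R))\le a_q(k',D)\le a_q(k-r,D)$. This mirrors the paper's proof of Theorem~\ref{thm:griesmer-residual-structure}(2).

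The paper instead uses Corollary~\ref{cor:equality-propagation} to see that $C$ is also an $(r+1)$-th generalized antiGriesmer code. It then applies part~1 with $r+1$ in place of $r$ and $h=r$. Since $R\neq0$, $m=\min\{k',1\}=1$, and the parameters follow at once.

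The paper's reduction is shorter and needs no arithmetic identity. Yours stays at level $r$ and makes the decomposition $n-\delta_r=a_q(k-r,\delta_{r+1}-\delta_r)$ explicit. It also keeps the minimum- and maximum-weight equality arguments visibly parallel. As a small bonus, your chain shows directly that when the puncturing kernel is larger than $A$ (so $k'<k-r$), equality forces the discarded terms $\lfloor D/q^j\rfloor$, $k'\le j\le k-r-1$, to vanish, that is, $D<q^{k'}$.
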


\begin{proof}
Let $1\le h<r$. If $k'=0$, then $n-\delta_h=0$. Since $\delta_h\le\delta_r\le n$, it follows that $n=\delta_h=\delta_r$.

Assume that $k'>0$. If $k'<r-h$, then $m=k'$. Since $R$ has full length, Lemma~\ref{lem:residual-maximum} gives $n-\delta_h=\delta_{k'}(R)\leq\delta_{k'+h}(C)-\delta_h\leq\delta_r-\delta_h$.
Since $n\ge\delta_r$, equality holds throughout the chain. Hence, $n=\delta_r$, and the stated equalities follow because $m=k'$.

It remains to consider $k'\ge r-h$, so $m=r-h$. Theorem~\ref{thm:hierarchy} and Lemma~\ref{lem:residual-maximum} give
\[
n-\delta_h\le a_q^{(r-h)}\bigl(k',\delta_{r-h}(R)\bigr)\le a_q^{(r-h)}(k',\delta_r-\delta_h).
\]
Taking $s=h$ in Proposition~\ref{prop:support-comparison} gives $(\delta_r-\delta_h)/{r-h\brack 1}_q\leq\delta_r/{r\brack 1}_q$. Since $k'-(r-h)\le k-r$,
\[
a_q^{(r-h)}(k',\delta_r-\delta_h)\le\delta_r-\delta_h+\sum_{i=1}^{k-r}\left\lfloor\frac{\delta_r}{q^i{r\brack 1}_q}\right\rfloor=n-\delta_h.
\]
Hence, every inequality is an equality, and the first statement follows.

For the second part, Corollary~\ref{cor:equality-propagation} gives equality at level $r+1$. Apply the first part with $r+1$ in place of $r$ and with $h=r$. Then $m=1$, and the stated parameters follow.
\end{proof}

Notice that $k'=k-h$ holds only when $\ker(\pi_A)=A$. In general the kernel can be larger, which is why the statement is formulated in terms of $k'$.

\section{The shortening argument and equality cases}\label{sec:projective-points}

Projection from a point is one of the spoiling operations considered by Tsfasman and Vl\u{a}du\c{t} \cite[Section~III-B.3]{TsfasmanVladut1995}. Kurz, Landjev, and Rousseva \cite[Theorem~1]{KurzLandjevRousseva2026} use this operation to prove the generalized Griesmer bound. We write their argument in terms of the shortened subcode $C_P$ defined in Section~\ref{sec:preliminaries}; choosing a point of minimum multiplicity gives the generalized antiGriesmer bound.

\begin{lemma}\label{lem:point-averaging}
Let $A=\{xG:x\in U\}\le C$ be an $r$-dimensional subcode with support weight $w(A)$. Among the $q^{k-r}{r\brack 1}_q$ points of $\mathcal{P}\setminus\mathrm{PG}(U^\perp)$, there is a point whose multiplicity is at least $\left\lceil w(A)/(q^{k-r}{r\brack 1}_q)\right\rceil$ and a point whose multiplicity is at most $\left\lfloor w(A)/(q^{k-r}{r\brack 1}_q)\right\rfloor$.
Thus,
\[
\gamma(\mathcal{M}_C)\ge\left\lceil\frac{d_r(C)}{q^{k-r}{r\brack 1}_q}\right\rceil,\qquad \mu(\mathcal{M}_C)\le\left\lfloor\frac{\delta_r(C)}{q^{k-r}{r\brack 1}_q}\right\rfloor.
\]
\end{lemma}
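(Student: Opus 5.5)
This is a pigeonhole argument over the complement of the subspace $\mathrm{PG}(U^\perp)$, based on the support formula \eqref{eq:geometric-support}.

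\textbf{Rewriting $w(A)$ as a sum.} Put $S=\mathcal{P}\setminus\mathrm{PG}(U^\perp)$. By \eqref{eq:geometric-support}, $w(A)=\mathcal{M}_C(S)=\sum_{P\in S}\mathcal{M}_C(P)$. By the point count recorded after \eqref{eq:geometric-support}, $|S|=q^{k-r}{r\brack 1}_q$, which is positive. So $w(A)$ is a sum of $N=q^{k-r}{r\brack 1}_q$ nonnegative integers.

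\textbf{Pigeonhole for the maximum.} Suppose every $P\in S$ had $\mathcal{M}_C(P)\le\lceil w(A)/N\rceil-1$. Since the multiplicities are integers, each would satisfy $\mathcal{M}_C(P)<w(A)/N$. Summing over $S$ gives $w(A)<w(A)$, a contradiction. Hence some point of $S$ has multiplicity at least $w(A)/N$, and by integrality at least $\lceil w(A)/N\rceil$.

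\textbf{Pigeonhole for the minimum.} Symmetrically, if every point of $S$ had multiplicity at least $\lfloor w(A)/N\rfloor+1>w(A)/N$, the sum would exceed $w(A)$. Hence some point has multiplicity at most $\lfloor w(A)/N\rfloor$.

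\textbf{Deducing the two displayed bounds.}
\begin{itemize}
\item For the first, choose $U$ so that $A$ attains $w(A)=d_r(C)$. This is possible because every $r$-dimensional subcode has the form $\{xG:x\in U\}$ with $\dim U=r$, as $G$ has full row rank $k$. The point supplied by the first step gives $\gamma(\mathcal{M}_C)\ge\lceil d_r(C)/N\rceil$, since $\gamma$ is the maximum over all of $\mathcal{P}$.
\item For the second, choose $A$ with $w(A)=\delta_r(C)$. The point supplied by the second step gives $\mu(\mathcal{M}_C)\le\lfloor\delta_r(C)/N\rfloor$.
\end{itemize}

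No step is a genuine obstacle. The only care needed is to keep the integrality argument explicit when converting the averaging bound into the ceiling and floor, and to confirm that $S$ is nonempty so the average is defined.
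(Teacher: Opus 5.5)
Your proposal is correct and follows the paper's proof: both use \eqref{eq:geometric-support} to write $w(A)$ as the sum of the multiplicities over the $q^{k-r}{r\brack 1}_q$ points of $\mathcal{P}\setminus\mathrm{PG}(U^\perp)$, average, and then apply this to subcodes attaining $d_r(C)$ and $\delta_r(C)$. You spell out the integrality step behind the ceiling and floor, which the paper leaves implicit in the phrase ``by averaging.''
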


\begin{proof}
By \eqref{eq:geometric-support}, the sum of the multiplicities over the $q^{k-r}{r\brack 1}_q$ points of $\mathcal{P}\setminus\mathrm{PG}(U^\perp)$ is $w(A)$. The two statements follow by averaging. Apply the first to a subcode of support weight $d_r(C)$ and the second to a subcode of support weight $\delta_r(C)$.
\end{proof}

A point of maximum multiplicity gives the generalized Griesmer bound, while a point of minimum multiplicity gives the generalized antiGriesmer bound.

\begin{proposition}\label{prop:projective-proof-griesmer}
Every $[n,k]_q$ code of full length satisfies the generalized Griesmer inequality \eqref{eq:generalized-griesmer}.
\end{proposition}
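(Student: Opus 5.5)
The plan is to argue by induction on $k$ with $r$ fixed, following the projection argument of Kurz, Landjev, and Rousseva, and to pass from $C$ to the shortened subcode $C_P$ at a point $P$ of maximum multiplicity. For the base case $k=r$, we have $n=d_k$, and the sum in \eqref{eq:generalized-griesmer} is empty, so equality holds.

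For $k>r$, choose a point $P=\langle p\rangle$ with $\mathcal{M}_C(P)=\gamma(\mathcal{M}_C)$. By Lemma~\ref{lem:point-averaging},
\[
\gamma(\mathcal{M}_C)\ge\left\lceil\frac{d_r(C)}{q^{k-r}{r\brack 1}_q}\right\rceil.
\]
By Section~\ref{sec:preliminaries}, $C_P$ is a full-length $[n-\gamma(\mathcal{M}_C),k-1]_q$ code. Since $r\le k-1$, the induction hypothesis applies to $C_P$ at level $r$.

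The key step is to show that $d_r(C_P)\ge d_r(C)$. An $r$-dimensional subcode of $C_P$ has the form $\{(xG_j)_{\langle G_j\rangle\ne P}:x\in U\}$ for some $r$-dimensional $U\le p^\perp$. Since $U\le p^\perp$, we have $p\in U^\perp$, so $P\in\mathrm{PG}(U^\perp)$. Hence no column representing $P$ lies in the support of $\{xG:x\in U\}$. Deleting the coordinates represented by $P$ therefore does not change the support size. By \eqref{eq:geometric-support}, the support weight of this subcode of $C_P$ equals $w(\{xG:x\in U\})\ge d_r(C)$. (This uses the injectivity of $x\mapsto$ the shortened codeword noted in Section~\ref{sec:preliminaries}, so the subcode really has dimension $r$.) The step needs only this projective check and is not a genuine obstacle.

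Combining these, the induction hypothesis and the fact that $g_q^{(r)}(k-1,x)$ is increasing in $x$ give
\[
n-\gamma(\mathcal{M}_C)\ge g_q^{(r)}\bigl(k-1,d_r(C_P)\bigr)\ge d_r+\sum_{i=1}^{k-1-r}\left\lceil\frac{d_r}{q^i{r\brack 1}_q}\right\rceil.
\]
Adding the lower bound for $\gamma(\mathcal{M}_C)$ supplies exactly the missing term with $i=k-r$, and this yields \eqref{eq:generalized-griesmer}. No rounding identities are needed, because the averaging bound on the point multiplicity already has the shape of the last ceiling term.
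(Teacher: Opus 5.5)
Your proof is correct and follows the paper's argument exactly: you fix $r$, induct on $k$, pass to the shortened subcode $C_P$ at a point of maximum multiplicity, and add the averaging bound from Lemma~\ref{lem:point-averaging} to the induction hypothesis. Your explicit check that $d_r(C_P)\ge d_r(C)$, using $P\in\mathrm{PG}(U^\perp)$ whenever $U\le p^\perp$, fills in a step that the paper only asserts.
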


\begin{proof}
Fix $r$ and argue by induction on $k\ge r$. If $k=r$, then $n=d_k$. Suppose $k>r$, and let $P$ be a point of maximum multiplicity. Let $\gamma=\mathcal{M}_C(P)$. The resulting code $C_P$ has dimension $k-1$, an effective length of $n-\gamma$, and satisfies $d_r(C_P)\geq d_r(C)$.
The induction hypothesis and Lemma~\ref{lem:point-averaging} give
\begin{align*}
n-\gamma&\ge g_q^{(r)}(k-1,d_r(C_P))\ge g_q^{(r)}(k-1,d_r),\\
\gamma&\ge\left\lceil\frac{d_r}{q^{k-r}{r\brack 1}_q}\right\rceil.
\end{align*}
Adding these two inequalities gives \eqref{eq:generalized-griesmer}.
\end{proof}

\begin{proposition}\label{prop:projective-proof-antigriesmer}
Every $[n,k]_q$ anticode of full length satisfies the generalized antiGriesmer inequality \eqref{eq:generalized-antigriesmer}.
\end{proposition}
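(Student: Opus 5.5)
Fix $r$ and argue by induction on $k\ge r$, mirroring the proof of Proposition~\ref{prop:projective-proof-griesmer} but choosing a point of \emph{minimum} multiplicity. If $k=r$, then $\delta_k=n$ and the sum in \eqref{eq:generalized-antigriesmer} is empty, so equality holds. Suppose $k>r$, let $P$ be a point with $\mathcal{M}_C(P)=\mu=\mu(\mathcal{M}_C)$, and form the shortened subcode $C_P$. By Section~\ref{sec:preliminaries}, $C_P$ is a full-length $[n-\mu,k-1]_q$ code, so the induction hypothesis (with the same $r\le k-1$) gives
\[
n-\mu\le a_q^{(r)}\bigl(k-1,\delta_r(C_P)\bigr).
\]

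The key comparison is $\delta_r(C_P)\le\delta_r(C)$. I would prove it by taking an $r$-dimensional subspace $U\le p^\perp$. Its subcode in $C_P$ has support equal to the multiset of columns not in $\mathrm{PG}(U^\perp)$, with the columns representing $P$ removed. Since $p\in U^\perp$, those columns lie in $\mathrm{PG}(U^\perp)$ anyway. Hence the support weight of the subcode of $C_P$ equals that of $\{xG:x\in U\}$ in $C$, which is at most $\delta_r(C)$. The function $a_q^{(r)}(k-1,x)$ is nondecreasing in $x$, so $n-\mu\le a_q^{(r)}(k-1,\delta_r)$.

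Lemma~\ref{lem:point-averaging} gives $\mu\le\lfloor\delta_r/(q^{k-r}{r\brack 1}_q)\rfloor$. This is exactly the last floor term $i=k-r$ in $a_q^{(r)}(k,\delta_r)$, since $a_q^{(r)}(k,x)=a_q^{(r)}(k-1,x)+\lfloor x/(q^{k-r}{r\brack 1}_q)\rfloor$. Adding the two inequalities then yields \eqref{eq:generalized-antigriesmer}.

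The only step needing care is the comparison $\delta_r(C_P)\le\delta_r(C)$ together with full length of $C_P$. Both follow from the support formula \eqref{eq:geometric-support} and the properties of $C_P$ already established, including the case $\mu=0$, where $C_P$ is a codimension-one subcode of $C$ and the inequality is immediate.
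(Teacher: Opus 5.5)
Your proposal is correct and follows essentially the same route as the paper: induction on $k\ge r$, passage to the shortened subcode $C_P$ at a point of minimum multiplicity, the comparison $\delta_r(C_P)\le\delta_r(C)$, and Lemma~\ref{lem:point-averaging} to bound $\mu$ by the last floor term. Your justification of $\delta_r(C_P)\le\delta_r(C)$ via $p\in U^\perp$ and \eqref{eq:geometric-support} fills in a step the paper states without proof.
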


\begin{proof}
Fix $r$ and argue by induction on $k\ge r$. If $k=r$, then $n=\delta_k$. Suppose that $k>r$, and let $P$ be a point of minimum multiplicity and put $\mu=\mathcal{M}_C(P)$. The code $C_P$ has dimension $k-1$, length $n-\mu$, and satisfies $\delta_r(C_P)\leq\delta_r(C)$.
The induction hypothesis and Lemma~\ref{lem:point-averaging} give
\begin{align*}
n-\mu&\le a_q^{(r)}(k-1,\delta_r(C_P))\le a_q^{(r)}(k-1,\delta_r),\\
\mu&\le\left\lfloor\frac{\delta_r}{q^{k-r}{r\brack 1}_q}\right\rfloor.
\end{align*}
Summing these two inequalities directly gives \eqref{eq:generalized-antigriesmer}.
\end{proof}

The induction also determines the equality cases. For $r=1$, see \cite[Theorem~3.3(2) and Proposition~3.4]{DengHuangXiang2026}. The formulation below generalizes the corresponding statements for every $r$ and for maximum support weights.

\begin{theorem}\label{thm:projective-point-equality}
Let $1\le r<k$.
\begin{enumerate}
\item Suppose that $C$ is an $r$-th generalized Griesmer code, and let $P$ be a point of maximum multiplicity in $\mathcal{M}_C$. Then
\[
\mathcal{M}_C(P)=\left\lceil\frac{d_r}{q^{k-r}{r\brack 1}_q}\right\rceil,\qquad n(C_P)=g_q^{(r)}(k-1,d_r),\qquad d_r(C_P)=d_r.
\]
Consequently, $C_P$ has dimension $k-1$ and is an $r$-th generalized Griesmer code.
\item Suppose that $C$ is an $r$-th generalized antiGriesmer code, and let $P$ be a point of minimum multiplicity in $\mathcal{M}_C$. Then
\[
\mathcal{M}_C(P)=\left\lfloor\frac{\delta_r}{q^{k-r}{r\brack 1}_q}\right\rfloor,\qquad n(C_P)=a_q^{(r)}(k-1,\delta_r),\qquad \delta_r(C_P)=\delta_r.
\]
Consequently, $C_P$ has dimension $k-1$ and is an $r$-th generalized antiGriesmer code.
\end{enumerate}
\end{theorem}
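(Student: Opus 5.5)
The plan is to rerun the inductive step from the proofs of Propositions~\ref{prop:projective-proof-griesmer} and~\ref{prop:projective-proof-antigriesmer}. When $n$ equals the bound, every inequality in that step must be an equality, and strict monotonicity of $g_q^{(r)}$ and $a_q^{(r)}$ in the last variable then identifies each quantity. The dimension claim needs no new work. In Section~\ref{sec:preliminaries} we saw that $C_P$ is a full-length $[n-\mathcal{M}_C(P),k-1]_q$ code for every point $P$, so $\dim C_P=k-1$ and $n(C_P)=n-\mathcal{M}_C(P)$.

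For part~1, let $\gamma=\mathcal{M}_C(P)=\gamma(\mathcal{M}_C)$. First I will check that $d_r(C_P)\ge d_r$, which the earlier proof used without comment. Each $r$-dimensional subcode of $C_P$ comes from some $U\le p^\perp$ of dimension $r$. The associated subcode of $C$ has support equal to the columns outside $\mathrm{PG}(U^\perp)$. Since $U\le p^\perp$ gives $P\in\mathrm{PG}(U^\perp)$, the coordinates at $P$ are not in that support, so the two supports coincide. Hence every support weight of an $r$-subcode of $C_P$ occurs as the support weight of an $r$-subcode of $C$, and $d_r(C_P)\ge d_r$ follows. Note that $k-1\ge r$ because $r<k$, so the generalized Griesmer bound, in any of its proved forms, applies to $C_P$. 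This gives the chain
\[
n-\gamma\ \ge\ g_q^{(r)}(k-1,d_r(C_P))\ \ge\ g_q^{(r)}(k-1,d_r).
\]
Lemma~\ref{lem:point-averaging} gives $\gamma\ge\lceil d_r/(q^{k-r}{r\brack 1}_q)\rceil$. The sum of the right-hand sides is exactly $g_q^{(r)}(k,d_r)$. This uses the index shift: the last term $\lceil d_r/(q^{k-r}{r\brack 1}_q)\rceil$ together with $g_q^{(r)}(k-1,d_r)$ make up $g_q^{(r)}(k,d_r)$. Because $n=g_q^{(r)}(k,d_r)$, both inequalities are equalities. That yields the value of $\gamma$ and $n(C_P)=n-\gamma=g_q^{(r)}(k-1,d_r)$. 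Strict monotonicity then forces $d_r(C_P)=d_r$, so $C_P$ is an $r$-th generalized Griesmer code.

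Part~2 follows the same pattern with $\mu=\mu(\mathcal{M}_C)$, using the same support identification to get $\delta_r(C_P)\le\delta_r$. The chain $n-\mu\le a_q^{(r)}(k-1,\delta_r(C_P))\le a_q^{(r)}(k-1,\delta_r)$ and the bound $\mu\le\lfloor\delta_r/(q^{k-r}{r\brack 1}_q)\rfloor$ sum to $a_q^{(r)}(k,\delta_r)=n$, so every step is an equality. The one subtle point is the case $\mu=0$. Then $C_P$ is a subcode of $C$, and I must make sure the bound is applied to a full-length code of the right dimension. The preliminaries already show that $C_P$ is full length in this case as well, so the argument goes through. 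Alternatively, part~2 could be derived from part~1 via Corollary~\ref{cor:equality-correspondence}, since complementation swaps points of maximum and minimum multiplicity and $(C^{[\lambda]})_P$ is the complement of $C_P$ in $\mathrm{PG}(k-2,q)$. The direct argument is simpler, so I will use it.

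The main obstacle is bookkeeping rather than ideas. It consists of checking the index-shift identity $g_q^{(r)}(k,x)=g_q^{(r)}(k-1,x)+\lceil x/(q^{k-r}{r\brack 1}_q)\rceil$ and its floor analogue, and of justifying that support weights of $r$-subcodes of $C_P$ form a subset of those of $C$.
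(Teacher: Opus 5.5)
Your proposal is correct and follows the paper's proof essentially verbatim. The paper also writes $n=(n-\gamma)+\gamma$ (respectively $n=(n-\mu)+\mu$) and bounds the two pieces by the bound for $C_P$ and by Lemma~\ref{lem:point-averaging}. It then notes that the sum equals $g_q^{(r)}(k,d_r)=n$ (respectively $a_q^{(r)}(k,\delta_r)=n$) and concludes by strict monotonicity. Your explicit support identification, giving $d_r(C_P)\ge d_r$ and $\delta_r(C_P)\le\delta_r$, fills in a step the paper only asserts in Propositions~\ref{prop:projective-proof-griesmer} and~\ref{prop:projective-proof-antigriesmer}.
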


\begin{proof}
Suppose first that $C$ is an $r$-th generalized Griesmer code, and put $\gamma=\mathcal{M}_C(P)$. Applying the generalized Griesmer bound to $C_P$ and using Lemma~\ref{lem:point-averaging} gives
\begin{align*}
n
&=(n-\gamma)+\gamma\\
&\ge g_q^{(r)}\bigl(k-1,d_r(C_P)\bigr)
+\left\lceil\frac{d_r}{q^{k-r}{r\brack 1}_q}\right\rceil\\
&\ge g_q^{(r)}(k-1,d_r)
+\left\lceil\frac{d_r}{q^{k-r}{r\brack 1}_q}\right\rceil\\
&=g_q^{(r)}(k,d_r)
=n.
\end{align*}
Hence, equality holds throughout. In particular, $\mathcal{M}_C(P)=\left\lceil d_r/(q^{k-r}{r\brack 1}_q)\right\rceil$ and $n(C_P)=g_q^{(r)}(k-1,d_r)$. Since $g_q^{(r)}(k-1,x)$ is strictly increasing in $x$, we also have $d_r(C_P)=d_r$.

Suppose next that $C$ is an $r$-th generalized antiGriesmer code, and put $\mu=\mathcal{M}_C(P)$. Applying the generalized antiGriesmer bound to $C_P$ and using Lemma~\ref{lem:point-averaging} gives
\begin{align*}
n
&=(n-\mu)+\mu\\
&\le a_q^{(r)}\bigl(k-1,\delta_r(C_P)\bigr)
+\left\lfloor\frac{\delta_r}{q^{k-r}{r\brack 1}_q}\right\rfloor\\
&\le a_q^{(r)}(k-1,\delta_r)
+\left\lfloor\frac{\delta_r}{q^{k-r}{r\brack 1}_q}\right\rfloor\\
&=a_q^{(r)}(k,\delta_r)
=n.
\end{align*}
Again, every inequality is an equality. In particular, $\mathcal{M}_C(P)=\left\lfloor\delta_r/(q^{k-r}{r\brack 1}_q)\right\rfloor$ and $n(C_P)=a_q^{(r)}(k-1,\delta_r)$. Since $a_q^{(r)}(k-1,x)$ is strictly increasing in $x$, we also have $\delta_r(C_P)=\delta_r$.
\end{proof}

Equality at one level also determines the extremal point multiplicities from every later support weight.

\begin{corollary}\label{cor:point-multiplicity-higher}
Let $r\le h<k$.
\begin{enumerate}
\item If $C$ is an $r$-th generalized Griesmer code, then $\gamma(\mathcal{M}_C)=\left\lceil d_h/(q^{k-h}{h\brack 1}_q)\right\rceil$. Moreover, for every $h$-dimensional subcode $A=\{xG:x\in U\}$, the set $\mathcal{P}\setminus\mathrm{PG}(U^\perp)$ contains a point of maximum multiplicity.
\item If $C$ is an $r$-th generalized antiGriesmer code, then $\mu(\mathcal{M}_C)=\left\lfloor\delta_h/(q^{k-h}{h\brack 1}_q)\right\rfloor$. Moreover, for every $h$-dimensional subcode $A=\{xG:x\in U\}$, the set $\mathcal{P}\setminus\mathrm{PG}(U^\perp)$ contains a point of minimum multiplicity.
\end{enumerate}
\end{corollary}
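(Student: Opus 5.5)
Fix $r\le h<k$. By Corollary~\ref{cor:equality-propagation}, an $r$-th generalized Griesmer code is also an $h$-th generalized Griesmer code. Likewise, an $r$-th generalized antiGriesmer code is also an $h$-th generalized antiGriesmer code. So it suffices to prove both statements for $h=r$, with $h$ in place of $r$; from now on I assume $C$ is an $h$-th generalized (anti)Griesmer code. I treat the Griesmer case; the antiGriesmer case is the same with ceilings replaced by floors, maxima by minima, and inequalities reversed.

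For the value of $\gamma(\mathcal{M}_C)$, I would apply Theorem~\ref{thm:projective-point-equality}(1) with $h$ in place of $r$. This is allowed since $h<k$. It says that any point $P$ of maximum multiplicity satisfies
\[
\mathcal{M}_C(P)=\left\lceil \frac{d_h}{q^{k-h}{h\brack 1}_q}\right\rceil .
\]
Hence $\gamma(\mathcal{M}_C)$ equals this value. In the antiGriesmer case, Theorem~\ref{thm:projective-point-equality}(2) gives $\mu(\mathcal{M}_C)=\lfloor \delta_h/(q^{k-h}{h\brack 1}_q)\rfloor$ in the same way.

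For the location statement, take an arbitrary $h$-dimensional subcode $A=\{xG:x\in U\}$. By Lemma~\ref{lem:point-averaging}, the set $\mathcal{P}\setminus\mathrm{PG}(U^\perp)$ contains a point $Q$ with
\[
\mathcal{M}_C(Q)\ge \left\lceil \frac{w(A)}{q^{k-h}{h\brack 1}_q}\right\rceil .
\]
Since $w(A)\ge d_h$, the right side is at least $\lceil d_h/(q^{k-h}{h\brack 1}_q)\rceil=\gamma(\mathcal{M}_C)$. On the other hand, $\mathcal{M}_C(Q)\le\gamma(\mathcal{M}_C)$ trivially. So $\mathcal{M}_C(Q)=\gamma(\mathcal{M}_C)$, and $Q$ is a point of maximum multiplicity in the required set.

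Dually, in the antiGriesmer case, Lemma~\ref{lem:point-averaging} gives a point $Q\notin\mathrm{PG}(U^\perp)$ with $\mathcal{M}_C(Q)\le\lfloor w(A)/(q^{k-h}{h\brack 1}_q)\rfloor$. Since $w(A)\le\delta_h$, this is at most $\mu(\mathcal{M}_C)$, so $Q$ has minimum multiplicity.

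There is no real obstacle. The only point needing care is that Theorem~\ref{thm:projective-point-equality} requires $h<k$ and equality at level $h$ itself. Both are supplied by the hypothesis $r\le h<k$ together with Corollary~\ref{cor:equality-propagation}.
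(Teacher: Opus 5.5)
Your proposal is correct and follows the paper's proof essentially step for step. You propagate equality to level $h$ via Corollary~\ref{cor:equality-propagation}, read off $\gamma(\mathcal{M}_C)$ and $\mu(\mathcal{M}_C)$ from Theorem~\ref{thm:projective-point-equality} at level $h$ (valid because $h<k$), and then combine Lemma~\ref{lem:point-averaging} with $d_h\le w(A)\le\delta_h$ to locate an extremal point in $\mathcal{P}\setminus\mathrm{PG}(U^\perp)$.
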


\begin{proof}
Equality at level $r$ implies equality at level $h$ by Corollary~\ref{cor:equality-propagation}. Applying Theorem~\ref{thm:projective-point-equality} with $h$ in place of $r$ gives both formulas.

For an $h$-dimensional subcode $A=\{xG:x\in U\}$, Lemma~\ref{lem:point-averaging} gives a point in $\mathcal{P}\setminus\mathrm{PG}(U^\perp)$ whose multiplicity is at least
\[
\left\lceil\frac{w(A)}{q^{k-h}{h\brack 1}_q}\right\rceil
\ge\left\lceil\frac{d_h}{q^{k-h}{h\brack 1}_q}\right\rceil
=\gamma(\mathcal{M}_C)
\]
in the Griesmer case. Hence, this point has maximum multiplicity. In the antiGriesmer case, the same lemma gives a point in $\mathcal{P}\setminus\mathrm{PG}(U^\perp)$ whose multiplicity is at most
\[
\left\lfloor\frac{w(A)}{q^{k-h}{h\brack 1}_q}\right\rfloor
\le\left\lfloor\frac{\delta_h}{q^{k-h}{h\brack 1}_q}\right\rfloor
=\mu(\mathcal{M}_C),
\]
so this point has minimum multiplicity.
\end{proof}

\section*{Acknowledgments}

The author would like to thank Tao Feng and Sascha Kurz for their helpful suggestions. The author is partially supported by the National Key R\&D Program of China under Grant No.~2025YFA1017700, the National Natural Science Foundation of China under Grant No.~123B2011, and the Postdoctoral Fellowship Program and China Postdoctoral Science Foundation under Grant No.~BX20250059.

\begin{singlespace}

\end{singlespace}


\begin{thebibliography}{99}

\bibitem{AhlswedeKhachatrian1998}
R. Ahlswede and L. H. Khachatrian,
\emph{The diametric theorem in Hamming spaces---optimal anticodes},
Adv. in Appl. Math. \textbf{20} (1998), no.~4, 429--449.

\bibitem{ChenXie2025}
H. Chen and C. Xie,
\emph{Projective linear codes and their simplex complementary codes},
J. Algebra \textbf{673} (2025), 304--320.

\bibitem{Delsarte1973}
P. Delsarte,
\emph{An algebraic approach to the association schemes of coding theory},
Philips Res. Rep. Suppl. No.~10 (1973), 97 pp.

\bibitem{DengHuangXiang2026}
H. Deng, H. Huang, and Q. Xiang,
\emph{Divisibility of Griesmer codes},
J. Combin. Theory Ser. A \textbf{222} (2026), Paper No.~106181.

\bibitem{DodunekovSimonis1998}
S. M. Dodunekov and J. Simonis,
\emph{Codes and projective multisets},
Electron. J. Combin. \textbf{5} (1998), no.~1, Research Paper 37, 23 pp.

\bibitem{Griesmer1960}
J. H. Griesmer,
\emph{A bound for error-correcting codes},
IBM J. Res. Develop. \textbf{4} (1960), no.~5, 532--542.

\bibitem{HellesethKloveLevenshteinYtrehus1995}
T. Helleseth, T. Kl{\o}ve, V. I. Levenshtein, and \O. Ytrehus,
\emph{Bounds on the minimum support weights},
IEEE Trans. Inform. Theory \textbf{41} (1995), no.~2, 432--440.

\bibitem{HellesethKloveYtrehus1992}
T. Helleseth, T. Kl{\o}ve, and \O. Ytrehus,
\emph{Generalized Hamming weights of linear codes},
IEEE Trans. Inform. Theory \textbf{38} (1992), no.~3, 1133--1140.

\bibitem{Kleitman1966}
D. J. Kleitman,
\emph{On a combinatorial conjecture of Erd\H{o}s},
J. Combinatorial Theory \textbf{1} (1966), no.~2, 209--214.

\bibitem{KurzLandjevRousseva2026}
S. Kurz, I. Landjev, and A. Rousseva,
\emph{Optimal codes and arcs for the generalized Hamming weights},
arXiv:2601.00250 [math.CO], 2026.

\bibitem{Nogin1999}
D. Yu. Nogin,
\emph{Higher weights of anticodes and the generalized Griesmer bound},
Finite Fields Appl. \textbf{5} (1999), no.~4, 409--423.

\bibitem{SolomonStiffler1965}
G. Solomon and J. J. Stiffler,
\emph{Algebraically punctured cyclic codes},
Information and Control \textbf{8} (1965), no.~2, 170--179.

\bibitem{TsfasmanVladut1995}
M. A. Tsfasman and S. G. Vl\u{a}du\c{t},
\emph{Geometric approach to higher weights},
IEEE Trans. Inform. Theory \textbf{41} (1995), no.~6, 1564--1588.

\bibitem{Ward1981}
H. N. Ward,
\emph{Divisible codes},
Arch. Math. (Basel) \textbf{36} (1981), no.~6, 485--494.

\bibitem{Ward1998}
H. N. Ward,
\emph{Divisibility of codes meeting the Griesmer bound},
J. Combin. Theory Ser. A \textbf{83} (1998), no.~1, 79--93.

\bibitem{Wei1991}
V. K. Wei,
\emph{Generalized Hamming weights for linear codes},
IEEE Trans. Inform. Theory \textbf{37} (1991), no.~5, 1412--1418.

\bibitem{XieChenDingLi2026}
C. Xie, H. Chen, C. Ding, and C. Li,
\emph{AntiGriesmer bounds, optimal codes, and their subcode support weight distributions},
IEEE Trans. Inform. Theory \textbf{72} (2026), no.~4, 2133--2143.

\bibitem{ZhangChenLinLiu2026}
G. Zhang, B. Chen, L. Lin, and H. Liu,
\emph{Improved antiGriesmer bounds for linear anticodes and applications},
J. Algebra \textbf{707} (2026), 145--164.

\end{thebibliography}
\end{document}